\def\texbold#1{#1}
\def\texbold#1{\textbf{#1}}
\newcommand{\rrvert}{\vert}
\newcommand{\llvert}{\vert}
\newtheorem{thm}{Theorem}
\newtheorem{tthm}{Theorem}
\newtheorem{lemma}{Lemma}
\newtheorem{cor}{Corollary}
\newtheorem{ccor}{Corollary}
\newtheorem*{CCOR}{Corollary A$'$}
\theoremstyle{definition}
\newtheorem{remark}{Remark}
\begin{document}

\begin{frontmatter}
\pretitle{Research Article}

\title{On a bound of the absolute constant in the Berry--Esseen
inequality for i.i.d. Bernoulli random variables}

\author[a]{\inits{A.}\fnms{Anatolii}~\snm{Zolotukhin}\ead[label=e1]{zolot\_aj@mail.ru}} 
\author[b]{\inits{S.}\fnms{Sergei}~\snm{Nagaev}\ead[label=e2]{nagaev@math.nsc.ru}}
\author[c]{\inits{V.}\fnms{Vladimir}~\snm{Chebotarev}\thanksref{cor1}\ead[label=e3]{chebotarev@as.khb.ru}}
\thankstext[type=corresp,id=cor1]{Corresponding author.}
\address[a]{\institution{Tula State University}, \cny{Russian Federation}}
\address[b]{\institution{Sobolev Institute of Mathematics}, \cny{Russian Federation}}
\address[c]{\institution{Computing Center FEB RAS}, \cny{Russian Federation}}



\markboth{A. Zolotukhin et al.}{On a bound of the
absolute constant in the Berry\,--\,Esseen inequality $\ldots$}

\begin{abstract}
It is shown that the absolute constant  in the Berry--Esseen
inequality for i.i.d. Bernoulli random variables is  strictly less
than the Esseen constant, if $1\le n\le 500000$, where $n$ is a
number of summands. This result is got both with the help of a
supercomputer and  an interpolation theorem, which is proved in the
paper as well. In addition, applying the method developed by
S.~Nagaev and V.~Chebotarev in 2009--2011, an upper bound is obtained
for the absolute constant in the Berry--Esseen inequality in the
case under consideration, which differs from the Esseen constant by
no more than 0.06\%. As an auxiliary result, we prove a bound in the
local Moivre--Laplace theorem which has a simple and explicit
form.

Despite the best possible result,  obtained by J.~Schulz in 2016, we
propose our approach to the problem of finding the absolute constant
in the Berry--Esseen inequality for two-point distributions
since this approach, combining analytical methods and the use of
computers,  could be useful in solving other mathematical problems.
\end{abstract}
\begin{keywords}
\kwd{Optimal value of absolute constant in Berry--Esseen inequality}
\kwd{binomial distribution}
\kwd{numerical methods}
\end{keywords}
\begin{keywords}[MSC2010]%
\kwd{60F05}
\kwd{65-04}
\end{keywords}

\received{\sday{30} \smonth{1} \syear{2018}}
\revised{\sday{22} \smonth{8} \syear{2018}}
\accepted{\sday{25} \smonth{8} \syear{2018}}
\publishedonline{\sday{14} \smonth{9} \syear{2018}}
\end{frontmatter}

\section{Introduction}
Let us consider the class $V$ of all probability distributions on
the real line $\mathbb R$, which have zero mean, unit variance and
finite third absolute moment.
 Let
$X,\,X_1,\,X_2,\,\ldots\,,X_n$ be i.i.d. random variables, where the
distribution of $X$ belongs to $V$. Denote
\begin{align*}
\varPhi(x)=\frac{1}{\sqrt{2\pi}}\int\limits
_{-\infty}^x e^{-t^2/2}\,dt,
\qquad \beta_3={\bf E}|X|^3.
\end{align*}
According to the
Berry--Esseen inequality \cite{Berry,Ess42}, there exists such
an absolute constant $C_0$ that  for all $n=1,\,2,\,\ldots\;$,
%
\begin{equation}
\label{B-E-ineq}\sup_{x\in{\mathbb R}} \Bigg|{\bf P} \Biggl(\frac{1}{\sqrt{n}}\sum
_{j=1}^nX_j<x \Biggr)-
\varPhi(x) \Bigg|\le \frac{C_0\beta_3}{\sqrt{n}}.
\end{equation}

The first upper bounds for the constant $ C_0 $ were obtained by
C.-G.~Esseen~\cite{Ess42} (1942), H.~Bergstr\"om~\cite{Berg49} (1949)
and K.~Takano~\cite{Takano} (1951).

In 1956 C.-G.~Esseen  \cite{Ess} showed that
%
\begin{equation}
\label{C0>}\lim_{n\to\infty}\frac{\sqrt{n}}{\beta_3}\sup
_{x\in\mathbb
R} \Bigg|{\bf P} \Biggl(\frac{1}{\sqrt{n}}\sum
_{j=1}^nX_j<x \Biggr)-\varPhi(x) \Bigg|\le
C_E,
\end{equation} where
$C_E=\frac{3+\sqrt{10}}{6\sqrt{2\pi}}=0.409732\,\ldots\;$.
 He has also found a two-point distribution, for
which the equality holds in \eqref{C0>}. He has proved the
uniqueness of such a distribution (up to a reflection).

 Consequently, $C_0\ge C_E$. The result of Esseen
served as an argument for the conjecture
%
\begin{equation}
\label{C=C}C_0= C_E,
\end{equation} that V.M. Zolotarev advanced in 1966
 \cite{Zolot66-TV}. The question whether the conjecture is correct  remains open up to now.

Since then, a number of upper bounds for $ C_0 $ have been obtained. A
historical review  can be found, for example, in
\cite{KorShv,preprint-2009,Shv}. We only note that recent results in this
field were obtained by I.S. Tyurin (see, for example,
\cite{arxive-Tyurin-2009,Tyurin-2009-Dokl,Tyurin-2010-TV,Tyurin-2010-Uspehi,Tyurin-Prob.Let.-2012}),
V.Yu. Korolev and I.G. Shevtsova (see, for example,
\cite{KorShv,KorSh-2010-TV}\xch{),}{,} and    I.G.~Shevtsova (see, for example,
\cite{arxive-Shevtsova-2013,Shevtsova-2011-TV,Shevtsova-2013-Inform,Shv,Shevtsova-2006-TV}).
The best upper estimate, known to date, belongs to Shevtsova: $C_0 \le0.469 $
\cite{Shv}. Note that in obtaining upper bounds, beginning from the estimates
in \cite{Zolot66-TV,Zolot67-ZW}, calculations play an essential role. In
addition, because of the large amount of computations, it was necessary to
use  computers.

The present paper is devoted to estimation of $C_0$  in the
particular case of i.i.d. Bernoulli random variables. In this case
we will use the notation $C_{02}$ instead of $C_0$. Let us recall
the chronology of the results along these lines.

In  2007 C.~Hipp and L.~Mattner  published  an analytical  proof of
the inequality $C_{02}\le\frac{1}{\sqrt{2\pi}}$ in the symmetric case
\cite{XiM.L.}.

In 2009  the second and third authors of the present paper
 have suggested the compound method
in which a refinement of C.L.T. for i.i.d. Bernoulli random
variables was used along with direct
calculations~\cite{preprint-2009}.  In unsymmetric case this method
allows  to obtain majorants for $C_{02}$, arbitrarily close to
$C_E$, provided that  the computer used is of sufficient power. The
main content of the preprint~\cite{preprint-2009} was published in 2011,
2012 in the form of the papers \cite{NaCh-Dokl,NaCh}. In these
papers, the following bound was proved, $C_{02}<0.4215$.

 In 2015 we obtained the bound
%
\begin{equation}
\label{953} C_{02} \le 0.4099539, \end {equation} by applying the
same approach as in \cite{preprint-2009,NaCh-Dokl,NaCh}, with the only difference that this time a
supercomputer was used instead of an ordinary PC. We announced bound \eqref{953}
in \cite{NChZ-2016}, but for a number of reasons, delayed publishing the proof,
and do it just now. While the present work being in preparation, we have detected
a small inaccuracy in the calculations, namely, bound \eqref{953} must be
increased by $10^{-7}$. Thus the following statement is true.
\begin{thm}\label{th-2} The bound %
\begin{equation}\label{954} C_{02} \le 0.409954 \end {equation} holds.
\end{thm}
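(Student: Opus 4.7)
The plan is to reduce the problem to a careful analysis over two-point distributions and then split the range of $n$ into a computational regime and an asymptotic regime. A centered, unit-variance Bernoulli variable is determined by a single parameter $p \in (0,1)$, and by the reflection $p \leftrightarrow 1-p$ we may assume $p \in (0, 1/2]$. Writing $\sigma^2 = p(1-p)$ and $\beta_3 = (p^2+(1-p)^2)/\sqrt{p(1-p)}$, the quantity
\[
\varDelta_n(p) \;=\; \frac{\sqrt{n}}{\beta_3}\,\sup_{x\in\mathbb R}\bigl|F_{n,p}(x)-\varPhi(x)\bigr|
\]
is a continuous function on the compact box $\{(n,p) : 1\le n\le N,\ p\in[p_0,1/2]\}$ for fixed $N$ and $p_0>0$, and $C_{02}=\sup_{n,p}\varDelta_n(p)$. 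One knows from Esseen's 1956 result that $\limsup_{n\to\infty}\varDelta_n(p)\le C_E$ for every $p$, so the task is to show that the excess of $\varDelta_n(p)$ over $C_E$ never exceeds $0.409954-C_E\approx 2.2\times 10^{-4}$.

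For the computational regime $1\le n\le N_0$ with $N_0 = 500{,}000$, I would discretize $p$ on a fine grid $\{p_k\}$ in $(0,1/2]$ and compute $\varDelta_n(p_k)$ directly from the explicit binomial probabilities (only $O(n)$ evaluations per pair, feasible on a supercomputer). To eliminate the gaps between grid points, I would invoke the interpolation theorem announced in the paper's abstract: it should bound $|\varDelta_n(p)-\varDelta_n(p_k)|$ by an explicit modulus of continuity in $p$ depending on $n$, so that the maximum over $p\in(0,1/2]$ is controlled by the maximum over the grid plus a small deterministic slack. Edge effects for $p$ near $0$ are handled separately, using that the two-point distribution becomes very skewed and $\beta_3\to\infty$, which makes $\varDelta_n(p)$ small (or one chooses $p_0$ small enough that a crude analytical bound already beats $0.409954$ in $(0,p_0]$).

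For the asymptotic regime $n>N_0$ I would follow the Nagaev--Chebotarev compound method. Fourier/Edgeworth analysis yields an expansion of $F_{n,p}(x)-\varPhi(x)$ whose leading correction (of order $1/\sqrt{n}$) is exactly what drives $\varDelta_n$ to $C_E$, so I need to bound the remainder by a quantity that, added to the asymptotic value, stays below $0.409954$. The crucial input here is the explicit local Moivre--Laplace bound mentioned in the abstract: it gives a tight, quantitative comparison between the binomial point masses and the Gaussian density, which one then integrates/sums (via Abel summation on the lattice $\frac{k-np}{\sqrt{npq}}$) to pass from local to uniform control of the distribution function. The cutoff $N_0=500{,}000$ must be chosen precisely so that the analytical remainder at $n=N_0$ is already below the $2.2\times 10^{-4}$ excess budget.

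The main obstacle is the third step: producing an analytical bound for $n>N_0$ that lies within $6\times 10^{-4}$ (relatively) of $C_E$ uniformly in $p$. Standard Berry--Esseen arguments give constants in the range $0.4$–$0.47$, which is far too loose; extracting an additional factor close to $1$ requires a genuinely sharp local limit theorem with explicit constants, careful bookkeeping of the Edgeworth remainder, and uniform-in-$p$ control of the tails where $|x|$ is large (there $|\varPhi(x)|$ is small so one has room, but $\beta_3$ still appears in the normalization). The computational side is laborious but essentially routine once the interpolation theorem is in place, so the novel analytical work concentrates on making Esseen's asymptotic equality into a sharp non-asymptotic inequality.
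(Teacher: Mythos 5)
Your proposal follows essentially the same route as the paper: a separate crude bound for small $p$ (the paper uses a Korolev--Shevtsova modification of the Berry--Esseen inequality on $(0,0.1689]$), a supercomputer grid search over $p\in[0.1689,0.5]$ for $1\le n\le N_0$ combined with the interpolation theorem (Theorem~\ref{th-3} and Corollary~\ref{th-1.2}) to control off-grid points, and the Nagaev--Chebotarev bound $T_n(p)\le{\cal E}(p)+R_0(p,n)$ with $R_0$ decreasing in $n$ for the regime $n\ge N_0$. The decomposition, the role of each ingredient, and the choice of $N_0$ as the crossover point all match the paper's argument.
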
 Meanwhile, in 2016 J.~Schulz~\cite{Shulz} obtained the unimprovable
result: if the symmetry condition is violated, $C_{02}=C_E$.
As it should be expected, J.~Schulz's proof turned out to be very
long and complicated. It should be said that methods based on the use of
computers, and analytical methods complement each other. The former ones cannot
lead to a final result, but they do not require so much effort. On the other
hand, they allow us to predict the exact result, and thus facilitate theoretical
research. 
\section{Shortly about the proof of Theorem
\ref{th-2}} \label{sect2} 
\subsection{Some notations. On the choice
of the left boundary of the interval for $p$} Let $X,\,X_1,\,
X_2,\ldots,\,X_n$ be a sequence of independent random
variables with the same distribution: %
\begin{equation}\label{Bernoulli}{
\bf P}(X\!=\!1)\!=\!p,\quad {\bf P}(X\!=\!0)=q=1-p.
\end{equation} In what follows we use the
following notations,
%
\begin{align}
&\!F_{n,p}(x)\!=\!{\bf P} \Biggl(\sum\limits
_{i=1}^nX_i<x
\Biggr),\quad G_{n,p}(x)\!=\!\varPhi \biggl({{x-np}\over{\sqrt{npq}}}
\biggr),
\nonumber
\\
&\Delta_n(p)\!=\!\sup\limits
_{x\in \mathbb
R} |F_{n,p}(x)-G_{n,p}(x)
|,\quad \varrho(p)\!=\!\frac{{\bf
E}|X-p|^3}{({\bf E}(X-p)^2)^{3/2}}\!=\!{{p^2+q^2}\over\sqrt{pq}},
\nonumber
\\
&T_n(p)\!=\!{\Delta_n(p)\sqrt{n}\over{\varrho(p)}},\quad {\cal E}(p)=
\frac{2-p}{3\sqrt{2\pi} \,  [p^2 +
(1-p)^2 ]}.\label{Tnp}
\end{align} Obviously,
%
\begin{equation}
\label{C02=sup}C_{02}= \sup\limits
_{n\geq 1}\sup\limits
_{p\in(0,0.5]}T_n(p).
\end{equation}
 In this paper we solve, in particular,  the
problem of computing the sequence
$T(n)=\sup\limits_{p\in(0,0.5)}T_n(p)$ for all~$n$ such that $1\le n
\le N_0$. Here and in what follows,
\begin{align*}
N_0=5\cdot10^5.
\end{align*}

Note that for fixed $ n $ and $ p $, the quantity
$\sup\limits_{x\in\mathbb R}  \llvert  F_{n, p} (x) -G_{n, p} (x)
 \rrvert $ is achieved  at some  discontinuity point of the function $
F_{n, p} (x)$ (see Lemma \ref{lem-D+-}). We consider distribution
functions that are continuous from the left. Consequently,
%
\begin{equation}
\label{Deltanp}\Delta_n(p)=\max_{0\le i\le
n}
\Delta_{n,i}(p),
\end{equation} where $i$ are integers, $
\Delta_{n,i}(p)=
 \{ \llvert F_{n,p}(i)-G_{n,p}(i) \rrvert ,\, \llvert F_{n,p}(i+1)
-G_{n,p}(i) \rrvert  \}$.

Note also that we can vary the parameter $ p $ in a narrower
interval than $ [0,0.5] $,  namely, in
\begin{align*}
I: = [0.1689,0.5]. %
\end{align*}  This conclusion follows from the next
statement.

\begin{lemma}\label{lem-1-ZNC}If $0<p\le0.1689$, then for
all \mbox{$n\!\ge\!1$},
%
\begin{equation}
\label{p=0}T_n(p)\!<\!0.4096.
\end{equation}
\end{lemma}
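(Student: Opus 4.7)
The claim is that on the ``small-$p$'' tail $(0,0.1689]$, $T_n(p)$ stays uniformly (in $n$) below $0.4096<C_E$. The intuition is that $\varrho(p)=(p^2+q^2)/\sqrt{pq}$ is decreasing on $(0,1/2]$ and grows like $1/\sqrt p$ near $0$, so its presence in the denominator of $T_n(p)=\Delta_n(p)\sqrt n/\varrho(p)$ easily dominates any growth of $\Delta_n(p)\sqrt n$ when $p$ is small. The cutoff $0.1689$ is evidently calibrated so that the argument just closes at the target $0.4096$.

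My approach is to split the range of $n$ at some threshold $n_0$. For $1\le n\le n_0$, use \eqref{Deltanp} directly: $\Delta_n(p)=\max_i \Delta_{n,i}(p)$ is the maximum of finitely many explicit real-analytic functions of $p$, and $T_n(p)$ can then be maximized on $(0,0.1689]$ by elementary calculus, or by a sufficiently fine grid together with a Lipschitz estimate, and checked to be below $0.4096$ for each such $n$ individually. For $n>n_0$, an Esseen-type inequality of the form $T_n(p)\le T_\infty(p)+\varepsilon_n(p)$ is the natural tool, where $T_\infty$ is the explicit limit function for the two-point distribution from Esseen's 1956 analysis (its global maximum equals $C_E$ and is attained at $p^\ast=\tfrac12(1-1/\sqrt{10})\approx 0.342$, so $T_\infty(p)<0.4096$ strictly on $(0,0.1689]$) and $\varepsilon_n(p)\to 0$ is an explicit nonasymptotic remainder.

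The main obstacle is producing an $\varepsilon_n(p)$ simultaneously small enough to close the gap $0.4096-T_\infty(p)$ and uniform in $p\in(0,0.1689]$. Generic Berry--Esseen estimates such as Shevtsova's $C_0\le 0.469$, or even the authors' earlier $C_{02}\le 0.4215$, are far too weak. The natural tool is the explicit local Moivre--Laplace bound that the authors flag in the abstract as an auxiliary result, which controls $\bigl|\binom{n}{i}p^{i}q^{n-i}-\phi((i-np)/\sqrt{npq})/\sqrt{npq}\bigr|$. Combined with Abel-type summation over $i$, this should yield a sharp explicit majorant for $\Delta_n(p)\sqrt n$ in terms of $n$ and $p$, after which the choice of $n_0$ is a delicate but routine balance against the $p$-dependent margin; the small-$n$ verification is tedious but computationally straightforward.
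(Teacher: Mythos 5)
Your plan diverges completely from the paper's proof, and as sketched it has a genuine gap in the regime that matters most. The paper's argument is two lines: it invokes a \emph{structural} Berry--Esseen inequality of Korolev and Shevtsova, $\Delta_n(p)\le \frac{0.33477}{\sqrt n}\,(\varrho(p)+0.429)$, so that $T_n(p)\le 0.33477\,(1+0.429/\varrho(p))$; since $\varrho(p)$ decreases on $(0,0.5]$, the right-hand side is at most $0.33477\,(1+0.429/\varrho(0.1689))<0.40959$ uniformly in $n$. You explicitly dismiss Berry--Esseen-type bounds as ``far too weak,'' but that is only true of the plain form $C_0\beta_3/(\sigma^3\sqrt n)$; the additive-constant form is exactly calibrated to this lemma, because dividing by $\varrho(p)\ge\varrho(0.1689)\approx 1.92$ makes the additive term harmless. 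The cutoff $0.1689$ is chosen so that this one-line computation lands just under the target, not (as you guessed) to make an expansion-plus-remainder argument close.

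The concrete gap in your route is the large-$n$, small-$np$ corner. Any bound of the form $T_n(p)\le T_\infty(p)+\varepsilon_n(p)$ built from a local Moivre--Laplace estimate has a remainder controlled by powers of $1/\sigma=1/\sqrt{npq}$, and for fixed $n$ this blows up as $p\to 0$; uniformity over all of $(0,0.1689]$ is therefore unattainable by that method. The paper's own expansion (Theorem A) requires $p\ge 4/n$ for precisely this reason, and the authors note that the region $np=O(1)$ needs a separate Poisson-approximation argument. Moreover, your proposed repair --- the local bound of Lemma 4 plus Abel summation --- gives, after summing $|\delta_n(k,p)|\le c_1/\sigma^2$ over $O(\sqrt n\,\sigma)$ relevant values of $k$, a contribution of order $1/(\sigma\,\sqrt{pq})$ times constants, which does not produce a remainder that vanishes uniformly on $(0,0.1689]$. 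You correctly identify uniformity in $p$ as ``the main obstacle,'' but the proposal does not overcome it, so the proof does not close. If you want to avoid citing the Korolev--Shevtsova inequality, you would at minimum need to split off the region $np\le\lambda_0$ and handle it by a Poisson (or direct combinatorial) argument, which is a substantial additional piece absent from your sketch.
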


Lemma \ref{lem-1-ZNC}  is proved in Section~\ref{sect3} with the
help of some modification of the Berry\,--\,Esseen inequality (with
numerical constants) obtained in
\cite{KorShv-Obozr-2010-2,KorShv-Obozr-2010}.

\begin{remark}
By the same method that is used to prove inequality~\eqref{p=0}, the
estimate \mbox{$T_n(p)\le 0.369$} is found in \cite{NaCh} in the
case $0<p<0.02$ ($n\ge1$) (see the proof of~(1.37) in \cite{NaCh}),
where an earlier estimate of V.~Korolev and I.~Shevtsova
\cite{KorShv} is used, instead of
\cite{KorShv-Obozr-2010-2,KorShv-Obozr-2010}. Note that the use of
modified inequalities of the  Berry\,--\,Esseen type, obtained in
\cite{KorShv-Obozr-2010-2,KorShv-Obozr-2010,KorShv}, is not
necessary for obtaining estimates of~$T_n (p) $ in the case when~$ p
$ are close to~0.

An alternative approach, using Poisson approximation, is proposed in
the pre\-print~\cite{preprint-2009}. Let us explain the essence of
this method.

An alternative bound is found in the domain
$\{(p,n):\,0.258\leq\lambda\leq 6,\,n\geq 200\}$, where
$\lambda=np$. Under these conditions, we have  $p\leq 0.03$, i.e.
$p$ are small enough. Consequently, the error arising under
replacement of the binomial distribution by Poisson distribution
$\varPi_\lambda$ with the parameter $\lambda$ is small.

Next, the distance  $d(\varPi_\lambda,G_\lambda)$ \xch{}{in the sense of uniform metric}  between $\varPi_\lambda$ and normal distribution
$G_\lambda$ with the mean $\lambda$ and the variance $\lambda$ is
estimated, where\querymark{Q1} \xch{\mbox{$d(U,V)=\sup\limits_{x\in\mathbb R}|U(x)-V(x)|$}}{$d(U,V)=\sup\limits_{x\in\mathbb R}|U(x)-V(x)|$} for
any distribution functions $U(x)$ and $V(x)$. Then the  estimate of
the distance between $ G_\lambda $ and the normal distribution $
G_{n, p} $ with the mean $ \lambda $ and variance $ npq $ is
deduced. Summing the obtained estimates, we  arrive at an estimate
for the distance between the original binomial distribution and $ G_
{n, p} $. As a result, in \cite[Lemma~7.8, Theorem~7.2]
{preprint-2009} we derive the estimate $ T_n (p) <0.3607 $, which is
valid for all points $ (p, n) $ in the indicated domain.
\end{remark}

\subsection{On calculations}
\label{subsect-2.2}

Define
\begin{align*}
C_{02}(N)=\max_{1\le n\le N}\sup_{p\in(0,0.5]}T_n(p),
\quad \overline C_{02}(N)=\sup_{n\ge
N}\sup
_{p\in(0,0.5]}T_n(p).
\end{align*} Obviously,
$C_{02}=\max\{C_{02}(N),\overline C_{02}(N+1)\}$ for every $N\ge1$.

It was proved in \cite{NaCh} that $\overline C_{02}(200)<0.4215$. By
that time it was shown with the help of a computer (see the preprint
\cite{N-M-Ch-prep}) that $C_{02}(200)<0.4096$, i.e.
%
\begin{equation}
\label{C_{02}(200)} C_{02}({200})<C_E,
\end{equation} and thus, $C_{02}<0.4215$ for all
$n\ge1$.

Some words about bound \eqref{C_{02}(200)}. By \eqref{C02=sup}, to
get $C_{02}(N)$ it is enough to calculate
$T(n)=\sup\limits_{p\in(0,0.5]}T_n(p)$ for every $1\le n\le N$, and
then find $\max\limits_{1\le n\le N}T(n)$. The
calculation of $T(n)$ is reduced to two problems. The first problem is to
calculate $\max\limits_{p_j\in S}T_n(p_j)$, where $S$ is a grid on
$(0,0.5]$, and the second one is to estimate $T_n(p)$ in intermediate
points $p$. Both problems were solved in \cite{N-M-Ch-prep} for
$1\le n\le200$.

It should be noted here that, according to the method, the quantity
$C_{02}(N)$ is calculated (with some accuracy), and $\overline
C_{02}(N)$ is estimated from above. In both cases, a computer is
required. The power of an ordinary PC is sufficient for calculating
majorants for $\overline C_{02}(N)$ whereas to calculate $
C_{02}(N)$  a supercomputer is needed  if $N$ is sufficiently large.
Moreover, an additional investigation of the interpolation type is
required for the convincing conclusion from computer calculations of
$ C_{02}(N)$. In our paper, Theorem~\ref{th-3} plays this role.

Denote by symbol   $S$ the uniform grid on $I$ with the step
   $h=10^{-12}$. The values of $ T_n (p_j) $ for all $ p_j \in S$ and $ 1 \le n \le N_0 $ were calculated on a supercomputer.
   \vskip2mm

\par\noindent\textbf{The result of the calculations}. {\it For all $1\le n\le
N_0$},
%
\begin{equation}
\label{<0.4-S}\max_{p_j\in
S}T_n(p_j)=T_{N_1}(
\overline p)=0.40973212897643\ldots<0.40973213.
\end{equation}

The counting algorithm is a triple loop: a loop with respect to the
parameter $ i $ (see~\eqref{Deltanp}) is nested in a loop with
respect to the parameter $ p $, which in turn is nested in the loop
with respect to the parameter~$ n $.

With the growth of $n$, the computation time increased rapidly. For
example, for $2000 \le n \le 2100$ calculations took more than 3
hours on a computer with processor Core2Due E6400. For $2101\le n\le
N_0$ calculations were carried out on the supercomputer Blue Gene/P.

It follows from \cite[Corollary~7]{NChZ-2016} that for $ n> 200 $ in
the loop with respect to $ i $, one can  take not all values of $ i
$ from 0 to $ n $, but only those, which satisfy the inequality
\begin{align*}
np - (\nu + 1) \sqrt{npq} \! \le \! i \le np + \nu \sqrt{npq}, %
\end{align*} where \mbox{$ \nu \! = \! \sqrt{3+ \sqrt{6}} $}. This
led to a significant reduction of computation time. We give
information about the 
computer time (without waiting for the
queue) in 
Table 1.

\begin{table}[h]
\caption{Dependence of computer time  on $n$ (supercomputer Blue
Gene/P)}\label{tabl-time}
\begin{tabular}{|c|c|c|c|c|c|}
\hline
 $n\in[N_1,N_2]$: & $[10000,11024]$ & $[30000,50000]$ & $[300000,320000]$ & $[490000,N_0]$\\
\hline
computer time: &  3 min & 2 hrs + 5 min & 4 hrs + 50 min & 7 hrs \\
\hline
\end{tabular}
\end{table}

  Calculations were carried out
on the supercomputer Blue Gene/P of the Computational Mathematics
and Cybernetics Faculty of Lomonosov Moscow State University. After
some changes in the algorithm, the calculations for $n$ such that
$490000\le n\le N_0 $, were also performed on the CC FEB RAS
Computing Cluster \cite{DataCenter}. The corresponding computer time
was 6 hours and 40 minutes.

 The
program is written in C+MPI and registered \cite{Zol}.

\subsection{Interpolation type results}

Let $p^\ast\in(0,0.5)$. Consider a uniform grid on $[p^\ast,0.5]$
with a step $h$. The following statement allows to estimate the
value of the function $\frac{1}{\varrho(p)}\,\Delta_{n,k}(p)$ at an
arbitrary point from the interval $[p^\ast,0.5]$ via the value of
this function at the nearest grid node and $h$.

Denote
%
\begin{equation}
\label{c1c2c3} c_1\!=\!0.516, \quad c_2\!=\!0.121,\quad
c_3\!=\!0.271.
\end{equation}

\begin{thm}\label{th-3}  Let $0<p^\ast<p\le0.5$,
$p^{\,\prime}$ be a node of a grid with a step $h$ on the interval
$[p^\ast,0.5]$, closest to $p$. Then for all $n\ge1$ and $0\le k\le
n$,
\begin{align*}
\bigg|\frac{1}{\varrho(p)}\,\Delta_{n,k}(p)-\frac{1}{\varrho(p^\prime)}\,
\Delta_{n,k}\bigl(p^\prime\bigr) \bigg| \le\frac{h}{2}\,L
\bigl(p^\ast\bigr),
\end{align*}
where
%
\begin{equation}
\label{L(p)-new}L(p)\!=\!\frac{1}{(1-2pq)\sqrt{pq}} \biggl(\frac{c_1}{p}+c_2+c_3
\,\frac{(1-2p)(1+2pq)}{1-2pq} \biggr).
\end{equation}
\end{thm}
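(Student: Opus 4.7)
The plan is to derive a pointwise Lipschitz bound for the map $p\mapsto\Delta_{n,k}(p)/\varrho(p)$ on $[p^\ast,0.5]$ whose slope is controlled by $L(p)$, then combine $|p-p^\prime|\le h/2$ with monotonicity of $L$.

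\emph{Step 1 (reduction to smooth branches).} Writing
\[
\Delta_{n,k}(p)=\max\bigl\{|F_{n,p}(k)-G_{n,p}(k)|,\,|F_{n,p}(k+1)-G_{n,p}(k)|\bigr\}
\]
and using the elementary inequality $|\max_j|a_j|-\max_j|b_j||\le\max_j|a_j-b_j|$, it suffices to bound, for each $j\in\{k,k+1\}$, the quantity
\[
|f_{k,j}(p)-f_{k,j}(p^\prime)|,\qquad f_{k,j}(p):=\frac{F_{n,p}(j)-G_{n,p}(k)}{\varrho(p)}.
\]
Since $F_{n,p}(j)$ is a polynomial in $p$ and $G_{n,p}(k),\varrho(p)$ are smooth on $(0,1/2]$, the mean value theorem reduces matters to proving $|f_{k,j}^\prime(\xi)|\le L(\xi)$ for every $\xi\in[p^\ast,0.5]$.

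\emph{Step 2 (derivative decomposition and term-by-term bounds).} Write
\[
f_{k,j}^\prime(p)=\frac{F_{n,p}^\prime(j)}{\varrho(p)}-\frac{G_{n,p}^\prime(k)}{\varrho(p)}-\frac{[F_{n,p}(j)-G_{n,p}(k)]\,\varrho^\prime(p)}{\varrho(p)^2},
\]
and match the three summands to the three pieces of $L(p)$ in \eqref{L(p)-new}. For the first, use the identity $F_{n,p}^\prime(j)=-n\binom{n-1}{j-1}p^{j-1}q^{n-j}$ together with the explicit local de Moivre--Laplace bound proved elsewhere in the paper, which majorises the binomial pmf by a constant times $1/\sqrt{npq}$; combined with $\varrho(p)=(1-2pq)/\sqrt{pq}$ this yields a term of shape $c_1/[p(1-2pq)\sqrt{pq}]$. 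For the second, $G_{n,p}^\prime(k)=\phi(z)\,\partial_p[(k-np)/\sqrt{npq}]$ with $z=(k-np)/\sqrt{npq}$; expanding and controlling the two resulting pieces by $\phi(z)\le 1/\sqrt{2\pi}$ and $|z|\phi(z)\le 1/\sqrt{2\pi e}$ produces the $c_2$ contribution. For the third, direct computation gives $\varrho^\prime(p)=-(1-2p)(1+2pq)/(2(pq)^{3/2})$ and hence
\[
\frac{|\varrho^\prime(p)|}{\varrho(p)^2}=\frac{(1-2p)(1+2pq)}{2\sqrt{pq}\,(1-2pq)^2};
\]
combined with a uniform a priori majorant for $|F_{n,p}(j)-G_{n,p}(k)|$ (available from the already-established numerical Berry--Esseen bounds valid for all $n\ge 1$) this supplies the $c_3$ piece. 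Finally, one verifies that $L$ is nonincreasing on $(0,0.5]$ (dominated by the strictly decreasing $c_1/p$ term), so $L(\xi)\le L(p^\ast)$ for every $\xi\in[p^\ast,0.5]$, closing the argument.

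The main obstacle is the quantitative matching of $c_1,c_2,c_3$: the local de Moivre--Laplace estimate used in the first term must be of the simple explicit form delivered by the paper's auxiliary theorem so as to produce exactly $c_1=0.516$, and the a priori estimate for $|F_{n,p}(j)-G_{n,p}(k)|$ used in the third term must be strictly sharper than $\tfrac12$ in order for $c_3=0.271$ to be admissible. The structural decomposition is routine; the delicate part is making every numerical constant fit uniformly in $n$, $k$, and $p\in[p^\ast,0.5]$.
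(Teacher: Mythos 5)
Your overall architecture---pull the max outside via $\bigl|\max_j|a_j|-\max_j|b_j|\bigr|\le\max_j|a_j-b_j|$, apply the mean value theorem to $p\mapsto (F_{n,p}(j)-G_{n,p}(k))/\varrho(p)$, bound the derivative by $L$, and finish with monotonicity of $L$---is essentially the paper's route (the paper uses a discrete product rule together with one-sided derivatives of the max, but that difference is cosmetic). The genuine gap is in Step 2: the three summands of $f_{k,j}'$ cannot be matched one-to-one to the three pieces of $L(p)$, because the first two are individually unbounded in $n$. Indeed $\partial_p F_{n,p}(k)=-\frac{k}{p}P_n(k)$, which near $k=np$ is of order $\frac{n}{p}\cdot\frac{1}{\sqrt{2\pi npq}}=\sqrt{n/(2\pi pq)}$, and $\partial_p G_{n,p}(k)=-\frac{k(1-2p)+np}{2pq\sigma}\,\varphi\bigl(\frac{k-np}{\sigma}\bigr)$ is of the same order; only their \emph{difference} is $O(1)$ in $n$. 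Relatedly, the local bound you invoke for the first summand (``majorises the binomial pmf by a constant times $1/\sqrt{npq}$'') is the wrong one: what is needed is the $O(1/(npq))$ bound on the local CLT \emph{error}, $|P_n(k)-\sigma^{-1}\varphi(\frac{k-np}{\sigma})|\le c_1/\sigma^2$. The correct accounting is
$\partial_pF_{n,p}(k)-\partial_pG_{n,p}(k)=-\frac{k}{p}\bigl[P_n(k)-\sigma^{-1}\varphi(\frac{k-np}{\sigma})\bigr]-\frac{k-np}{2pq\sigma}\,\varphi(\frac{k-np}{\sigma})$:
the bracket contributes $\frac{k}{p}\cdot\frac{c_1}{\sigma^2}\le\frac{c_1}{p^2q}$ (whence the $c_1/p$ piece of $L$ after dividing by $\varrho$), and the residual---which is bounded only because the divergent parts of $F'$ and $G'$ cancel exactly---is controlled by $|x|\varphi(x)\le 1/\sqrt{2\pi e}$, giving $c_2=0.121$. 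Your bound $\varphi\le1/\sqrt{2\pi}$ plays no role and cannot rescue a separate estimate of $G'$.

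Two smaller points. The a priori bound on $|F_{n,p}(j)-G_{n,p}(k)|$ needed for the $c_3$ term is $\le 2c_3=0.542$, not ``strictly sharper than $\tfrac12$''; the paper cites $\Delta_{n,k}(p)<0.541$, and a bound below $\tfrac12$ is neither needed nor available uniformly in $n,k,p$. Also, the monotonicity of $L$ is not simply ``dominated by the $c_1/p$ term'': the prefactor $1/[(1-2pq)\sqrt{pq}]$ is not monotone on $(0,0.5]$ (its reciprocal $(1-2pq)\sqrt{pq}$ changes monotonicity at $pq=1/6$), so the paper proves the decrease of both $A(p)$ and $L_2(p)/\varrho(p)$ by an explicit logarithmic-derivative and polynomial-sign analysis; this step needs an actual argument in your write-up.
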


The next statement follows from Theorem \ref{th-3}. Note that
without it the proof of Theorem~\ref{th-2} would be incomplete.

\begin{cor}\label{th-1.2}  If
  $p\in I$, and $p^\prime$ is a node of the grid $S$, closest to $p$, then for all $1\le
  n\le N_0$,
\begin{align*}
\big|T_n(p)-T_n\bigl(p^\prime\bigr)\big|\le 4.6
\cdot10^{-9}.
\end{align*}
\end{cor}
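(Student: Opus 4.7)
The plan is to reduce the corollary to a pointwise application of Theorem~\ref{th-3} in the index $k$, then pass to the maximum over $k$ and multiply by $\sqrt{n}$. I would choose $p^\ast = 0.1689$, the left endpoint of $I$, and let $p^\prime$ denote the node of $S$ nearest to an arbitrary $p\in I$. Since $S\subset I$, both $p$ and $p^\prime$ are at least $0.1689$; if $p=0.1689$ then $p^\prime=p$ and the inequality is immediate, so one may assume $p>p^\ast$. Theorem~\ref{th-3} then applies with this $p^\ast$ and gives, for every $0\le k\le n$,
\begin{equation*}
\left|\frac{\Delta_{n,k}(p)}{\varrho(p)}-\frac{\Delta_{n,k}(p^\prime)}{\varrho(p^\prime)}\right|\le \frac{h}{2}\,L(0.1689).
\end{equation*}

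Next I would invoke the elementary inequality $|\max_k a_k-\max_k b_k|\le \max_k|a_k-b_k|$, applied to the finite families $a_k=\Delta_{n,k}(p)/\varrho(p)$ and $b_k=\Delta_{n,k}(p^\prime)/\varrho(p^\prime)$ indexed by $0\le k\le n$. Together with the representation \eqref{Deltanp} of $\Delta_n$ as such a maximum, this yields
\begin{equation*}
\left|\frac{\Delta_n(p)}{\varrho(p)}-\frac{\Delta_n(p^\prime)}{\varrho(p^\prime)}\right|\le \frac{h}{2}\,L(0.1689).
\end{equation*}
Multiplying by $\sqrt{n}\le \sqrt{N_0}$ and recalling the definition of $T_n$ from \eqref{Tnp} produces
\begin{equation*}
|T_n(p)-T_n(p^\prime)|\le \frac{h\sqrt{N_0}}{2}\,L(0.1689).
\end{equation*}

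All that remains is a short numerical verification, which is where the only real obstacle lies. Substituting $p=0.1689$ into \eqref{L(p)-new} with the constants \eqref{c1c2c3} gives $pq\approx 0.1404$, $1-2pq\approx 0.7193$, and a prefactor $1/((1-2pq)\sqrt{pq})\approx 3.71$; the bracket evaluates to roughly $3.49$ (dominated by $c_1/p\approx 3.06$), so $L(0.1689)\approx 12.97$. With $h=10^{-12}$ and $\sqrt{N_0}\approx 707.1$, the right-hand side is at most $\tfrac12\cdot 10^{-12}\cdot 707.1\cdot 12.97\approx 4.59\cdot 10^{-9}$, which fits comfortably within the claimed $4.6\cdot10^{-9}$. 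The key structural ingredient, Theorem~\ref{th-3}, is taken as given; the main thing to check is that $L(0.1689)$ is actually small enough (about $13$) for the product $\tfrac12 h\sqrt{N_0}\,L(0.1689)$ to stay below the stated constant, which the computation above confirms with a small margin.
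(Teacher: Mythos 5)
Your proof is correct and follows essentially the same route as the paper: apply Theorem~\ref{th-3} with $p^\ast=0.1689$, multiply by $\sqrt{n}\le\sqrt{N_0}$, and check numerically that $L(0.1689)<12.98$ so that $\tfrac{h}{2}\sqrt{N_0}\,L(0.1689)<4.6\cdot10^{-9}$. The only difference is that you make explicit the passage from the per-$k$ bound to $T_n$ via $|\max_k a_k-\max_k b_k|\le\max_k|a_k-b_k|$, a step the paper leaves implicit.
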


\begin{proof} It follows
from Theorem~\ref{th-3} that for  $0\le k\le n\le N_0$,
%
\begin{equation}
\label{<L(0.1689)} \bigg|\frac{\sqrt{n}}{\varrho(p)}\,\Delta_{n,k}(p)-\frac{\sqrt{n}}{\varrho(p^\prime)}
\,\Delta_{n,k} \bigl(p^\prime \bigr) \bigg|\le \sqrt{N_0}
\,\frac{1}{2}\,10^{-12}\,L(0.1689).
\end{equation} Since
 $L(0.1689)<12.98$, the right-hand side of
inequality~\eqref{<L(0.1689)}  is majorized by the
number~$4.6\cdot10^{-9}$. This implies the statement of
Corollary~\ref{th-1.2}.\end{proof}

\subsection{On the proof of Theorem~\ref{th-2}}

It follows from \eqref{<0.4-S}, Corollary  \ref{th-1.2} and
Lemma~\ref{lem-1-ZNC}
 that for all $1\le n\le N_0$ and $p\in(0,0.5]$, the
following inequality holds, $T_n(p)<0.4097321346<C_E$ (for details,
see~\eqref{Knp<0.41}). It is easy to verify that this inequality is
true for $ p \in (0.5,1) $ as well. Hence, inequality \eqref{954}
implies Theorem~\ref{th-2}.

\subsection{About structure of the paper}

The structure of  the paper is as follows. The proof of  Theorem
\ref{th-3}, the main analytical result  of the paper, is given in
Section~\ref{sect3-}. The proof consists of 12 lemmas.

In Section \ref{sect3},  Theorem~\ref{th-2} is proved. The section
consists of three subsections. In the first one, the formulation of
Theorem~1.1~\cite{NaCh} is given. Several corollaries from the latter
are also deduced here. The second subsection  discusses the
connection between the result of K. Neammanee \cite{Neamm}, who
refined and generalized
 Uspensky's estimate~\cite{Uspen}, and  the problem of
estimating $C_{02}$. It is shown that one can obtain from the result
of K.~Neammanee the same estimate for $C_{02}$ as ours, but for a
much larger $N$. This means that calculating $C_{02}(N)$  requires
much more computing time if to use Neammanee's estimate.

In the third subsection, we give, in particular, the proof of
Lemma~\ref{lem-1-ZNC}.

\section{Proof of Theorem \ref{th-3}}
\label{sect3-}

We need the following statement, which we give without proof.

\begin{lemma}\label{lem-D+-} Let $G(x)$ be a distribution
function with a finite number of discontinuity points, and $G_0(x)$
a continuous distribution function. Denote $\delta(x)=G(x)-G_0(x)$.
There exists a discontinuity point $x_0$ of $G(x)$ such that the
magnitude $\sup\limits_x|\delta(x)|$ is attained in the following
sense: if $G$ is continuous from the left, then
$\sup\limits_x|\delta(x)|=\max \{\delta(x_0+),\,-\delta(x_0) \}$,
and if $G$ is continuous from the right, then
$\sup\limits_x|\delta(x)|=\max \{\delta(x_0),\,-\delta(x_0-) \}$.
\end{lemma}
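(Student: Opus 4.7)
The plan is to exploit the fact that a distribution function $G$ with finitely many jumps is piecewise constant, so the continuous function $\delta=G-G_0$ is monotonically controlled on each piece. First I would enumerate the discontinuities $x_1<x_2<\cdots<x_N$ and, in the left-continuous case, partition $\mathbb R$ into $(-\infty,x_1]$, the bounded intervals $(x_i,x_{i+1}]$ for $i=1,\ldots,N-1$, and $(x_N,\infty)$. On each such interval $G$ takes a constant value, so $\delta$ is continuous and non-increasing there (because $-G_0$ is). Hence on every piece the supremum of $|\delta|$ is realised by the two endpoint-side values, and the global sup reduces to a finite maximum.

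Next I would compute those endpoint contributions. On $(x_i,x_{i+1}]$, as $x\downarrow x_i$ we have $\delta(x)\to G(x_i+)-G_0(x_i)=\delta(x_i+)$, and by left-continuity of $G$ the value $\delta(x_{i+1})=G(x_i+)-G_0(x_{i+1})$ is attained at the right endpoint; monotonicity then yields $\sup_{(x_i,x_{i+1}]}|\delta|=\max\{\delta(x_i+),\,-\delta(x_{i+1})\}$. Since $\delta(\pm\infty)=0$, on $(-\infty,x_1]$ the sup is $-\delta(x_1)$ and on $(x_N,\infty)$ it is $\delta(x_N+)$, so the tail intervals contribute only their single finite-endpoint term.

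Taking the maximum over intervals, each discontinuity $x_i$ contributes exactly the pair $\{\delta(x_i+),\,-\delta(x_i)\}$, giving $\sup_{x\in\mathbb R}|\delta(x)|=\max_{1\le i\le N}\max\{\delta(x_i+),\,-\delta(x_i)\}$. Choosing $x_0$ to be any $x_i$ realising this outer maximum then yields the stated formula. The right-continuous case is handled by the mirror argument on the intervals $[x_i,x_{i+1})$, with $G(x_i-)$ replacing $G(x_i+)$ and the analogous identity $\sup|\delta|=\max\{\delta(x_0),\,-\delta(x_0-)\}$. The only real care point is the bookkeeping of one-sided limits at each jump according to the direction of continuity of $G$; beyond that, the argument collapses to the elementary observation that a continuous monotone function on a (possibly half-open) interval attains its extrema at the endpoints.
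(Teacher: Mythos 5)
The paper states this lemma without proof (``which we give without proof''), so there is nothing of the authors' to compare your argument against; what you have written is the natural way to supply the missing details. The endpoint bookkeeping is carried out correctly: the identity $\sup_{t\in[a,b)}|t|=\max\{b,-a\}$ underlying your formula $\sup_{(x_i,x_{i+1}]}|\delta|=\max\{\delta(x_i+),\,-\delta(x_{i+1})\}$ is right, the tail intervals contribute $-\delta(x_1)$ and $\delta(x_N+)$ as you say, and collecting everything does reassemble exactly the pairs $\{\delta(x_i+),\,-\delta(x_i)\}$, with the mirror argument handling the right-continuous case.

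There is, however, a genuine flaw at your very first step: it is \emph{not} true that a distribution function with finitely many discontinuity points is piecewise constant. A distribution function may carry an absolutely continuous component and still have only finitely many jumps; for such a $G$ your assertion that ``on each such interval $G$ takes a constant value'' fails, and in fact the lemma itself, read literally, fails with it. For instance, take $G_0$ to be the distribution function of the uniform law on $[0,1]$ and $G=0.99\,H+0.01\,\mathbf{1}_{(2,\infty)}$, where $H$ is the distribution function of the uniform law on $[0,0.01]$: the only discontinuity of $G$ is at $x_0=2$, where $\max\{\delta(x_0+),\,-\delta(x_0)\}=0.01$, yet $|\delta(0.01)|=0.98$. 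Your proof (and the lemma) is correct precisely when $G$ is the distribution function of a law concentrated on finitely many points, which is the only case the paper needs, since it is applied to the binomial distribution function $F_{n,p}$. You should therefore either state that extra hypothesis explicitly or record that you are reading ``finite number of discontinuity points'' as ``purely discrete with finitely many atoms.'' With that reading fixed the argument is complete; the remaining slip is only cosmetic, namely calling $\delta$ ``continuous'' in your opening sentence when it is of course continuous only on each piece between consecutive jumps.
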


Define $f(t)={\bf E}e^{it(X-p)}\equiv qe^{-itp}+pe^{itq}$.

\begin{lemma} \label{lem-2-ZNC} For all $t\in\mathbb R$,
\begin{align*}
|f(t)|\le \exp \biggl\{-2pq\,\sin^2 \frac{t}{2} \biggr\}.
\end{align*}
\end{lemma}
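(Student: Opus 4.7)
The plan is to compute $|f(t)|^2$ directly and then apply the elementary inequality $1-x \le e^{-x}$. Since $f(t) = qe^{-itp} + pe^{itq}$, multiplying by the conjugate gives
\begin{align*}
|f(t)|^2 = \bigl(qe^{-itp} + pe^{itq}\bigr)\bigl(qe^{itp} + pe^{-itq}\bigr) = q^2 + p^2 + pq\bigl(e^{it(p+q)} + e^{-it(p+q)}\bigr).
\end{align*}
Using $p+q = 1$, this simplifies to $q^2 + p^2 + 2pq\cos t$, and since $p^2 + q^2 = (p+q)^2 - 2pq = 1-2pq$, we obtain
\begin{align*}
|f(t)|^2 = 1 - 2pq(1-\cos t) = 1 - 4pq\sin^2 (t/2),
\end{align*}
where I used the half-angle identity $1-\cos t = 2\sin^2(t/2)$.

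Next I would invoke the elementary inequality $1-x \le e^{-x}$, valid for all real $x$, with $x = 4pq\sin^2(t/2)$. This gives $|f(t)|^2 \le \exp\{-4pq\sin^2(t/2)\}$, and taking square roots yields the claimed bound. There is no real obstacle here: the computation is exact, and the only inequality used is the standard tangent-line bound for the exponential. The whole argument fits in a few lines.
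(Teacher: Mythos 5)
Your computation is correct and complete: $|f(t)|^2 = p^2+q^2+2pq\cos t = 1-4pq\sin^2(t/2)$, the tangent-line bound $1-x\le e^{-x}$ applies, and the square root is harmless since both sides are nonnegative. The paper itself gives no argument at all here --- it simply imports the statement from Lemma 8 of \cite{NaCh}, modulo a change of notation. So your proposal is not so much a different route as the only route actually written down: a short, self-contained, elementary verification of exactly the kind the cited lemma presumably contains. What your version buys is that the reader need not chase the reference; what the paper's citation buys is brevity and consistency with its practice of reusing machinery from \cite{NaCh}. No gap to report.
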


\begin{proof} Taking into account the difference in the notations, we obtain the statement of Lemma \ref{lem-2-ZNC} from~\cite[Lemma 8]{NaCh}.\end{proof}

Further, we will use  the following notations:
\begin{align*}
\sigma=\sqrt{npq},\quad\beta_3(p)={\bf E} {|X-p|^3},
\end{align*} $Y$ is a
standard normal random variable. Note that
$\varrho(p)=\frac{\beta_3(p)}{(pq)^{3/2}}$.

\begin{lemma}\label{lem_f^n-e^n-10} The following bound is
true for all $n\ge2$,
\begin{equation*}
\int_{|t|\le\pi}|f^n(t)-e^{-npqt^2/2}|
\,dt<\frac{1}{\sigma^2}\, \biggl(f(p,n)+\pi \sigma^2e^{-\sigma^2}+
\frac{4}{\pi}\,e^{-\pi^2\sigma^2/8} \biggr)\xch{,}{.}
\end{equation*}
where
\begin{align*}
f(p,n)= \bigl(p^2+q^2 \bigr)\,\frac{\pi^4}{96}\,
\biggl(\frac{n}{n-1} \biggr)^2+ \frac{3\pi^5\sqrt{\pi
pq}}{2^{10}\sqrt{n}}\, \biggl(
\frac{n}{n-1} \biggr)^{5/2}.
\end{align*}
\end{lemma}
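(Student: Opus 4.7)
The plan is to split the integration region $\{|t|\le\pi\}$ at the natural cut-off $|t|=\pi/2$: on the tail $\pi/2<|t|\le\pi$ the bound $|f(t)|\le\exp\{-2pq\sin^2(t/2)\}$ of Lemma~\ref{lem-2-ZNC} just reaches $|f^n(t)|\le e^{-\sigma^2}$, which will produce the $\pi\sigma^2 e^{-\sigma^2}$ summand of the claim, while the complementary Gaussian tail bound for $e^{-npqt^2/2}$ at the same cut-off produces the other exponential summand. The central region $|t|\le\pi/2$ will account for the $f(p,n)$-term.

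On the tail I would apply the triangle inequality $|f^n(t)-e^{-npqt^2/2}|\le|f^n(t)|+e^{-npqt^2/2}$. Lemma~\ref{lem-2-ZNC} together with $\sin^2(t/2)\ge 1/2$ gives $\int_{\pi/2\le|t|\le\pi}|f^n(t)|\,dt\le\pi e^{-\sigma^2}$, i.e.\ $\sigma^{-2}\cdot\pi\sigma^2 e^{-\sigma^2}$. The standard Gaussian tail estimate $\int_a^\infty e^{-\sigma^2 t^2/2}\,dt\le(a\sigma^2)^{-1}e^{-a^2\sigma^2/2}$ applied at $a=\pi/2$ yields $(4/\pi)\sigma^{-2}e^{-\pi^2\sigma^2/8}$, completing the last two summands of the claim.

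On the central region I would use the Lindeberg-type telescoping identity
\[
f^n(t)-e^{-npqt^2/2}=\sum_{k=1}^n f^{k-1}(t)\bigl[f(t)-e^{-pqt^2/2}\bigr]e^{-(n-k)pqt^2/2}.
\]
Combining Lemma~\ref{lem-2-ZNC} with $\sin(t/2)\ge t/\pi$ (valid on $|t|\le\pi$) and the elementary $2/\pi^2<1/2$ gives the envelope $\max(|f(t)|,e^{-pqt^2/2})\le e^{-2pqt^2/\pi^2}$, so
\[
|f^n(t)-e^{-npqt^2/2}|\le n\,|f(t)-e^{-pqt^2/2}|\,e^{-2(n-1)pqt^2/\pi^2}.
\]
It then remains to bound $|f(t)-e^{-pqt^2/2}|$ by an explicit polynomial in $t$ on $|t|\le\pi/2$. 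After the substitution $u=t\sqrt{2(n-1)pq}/\pi$, the Gaussian moments $\int_0^\infty u^k e^{-u^2}\,du$ produce precisely the factors $n/(n-1)^2$ and $n/(n-1)^{5/2}$, and the surviving powers of $\pi$ and $pq$ must reproduce the constants $(p^2+q^2)\pi^4/96$ and $3\pi^5\sqrt{\pi pq}/2^{10}$ appearing in $f(p,n)$.

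The main obstacle is obtaining the polynomial bound for $|f(t)-e^{-pqt^2/2}|$ with the precise coefficient structure $p^2+q^2=1-2pq$. A naive term-by-term Taylor comparison produces a leading imaginary $t^3$-contribution proportional to $|q-p|$, which does not on its own collapse into $1-2pq$; one has to repackage the odd and even parts together, for instance by working with $g(t)=\log f(t)+pqt^2/2$ or by exploiting the Bernoulli-specific identity $|f(t)|^2=1-4pq\sin^2(t/2)$, so that the $|q-p|$-asymmetry is absorbed into $p^2+q^2$ and the stated constants emerge.
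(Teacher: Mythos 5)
Your decomposition is the same as the paper's (split at $|t|=\pi/2$; triangle inequality, Lemma~\ref{lem-2-ZNC} with $\sin^2(t/2)\ge 1/2$, and the Gaussian tail bound on the outer region; the telescoping factorization of $f^n-e^{-npqt^2/2}$ plus a decay envelope on the inner region), and the outer region is handled correctly. But the inner region has two genuine problems. First, your envelope is too weak to give the stated constants. From $\sin(t/2)\ge t/\pi$ on $[0,\pi]$ you only get $|f(t)|\le e^{-2pqt^2/\pi^2}$, i.e.\ an effective Gaussian parameter $m'=4(n-1)pq/\pi^2$. The paper instead uses $|\sin x|\ge\frac{2\sqrt2}{\pi}|x|$ on $|x|\le\pi/4$, which on $|t|\le\pi/2$ gives $|f(t)|\le e^{-4pqt^2/\pi^2}$ and $m=8(n-1)pq/\pi^2$ (still $\le e^{-pqt^2/2}$ since $4/\pi^2<1/2$, so it dominates both factors in the telescoping sum). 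Since the cubic and quartic Gaussian moments scale like $m^{-2}$ and $m^{-5/2}$, your choice inflates the two terms of $f(p,n)$ by factors $4$ and $2^{5/2}$ respectively, and you would end up with $\pi^4/24$ in place of $\pi^4/96$; the lemma as stated would not follow.

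Second, the step you flag as "the main obstacle" is actually not an obstacle, and the workarounds you sketch ($\log f$, the identity for $|f(t)|^2$) are left unexecuted, so the key estimate $|f(t)-e^{-pqt^2/2}|\le\frac{|t|^3}{6}\beta_3(p)+\frac{t^4}{8}(pq)^2$ is missing from your argument. The $|q-p|$ factor you worry about is the signed third cumulant; it never appears if you expand $f(t)={\bf E}e^{it(X-p)}$ only to second order with a third-order \emph{integral remainder}, so that the bound involves the absolute third moment ${\bf E}|X-p|^3=\beta_3(p)=pq(p^2+q^2)$ — this is exactly where $p^2+q^2$ comes from — while the Gaussian factor $e^{-pqt^2/2}={\bf E}e^{it\sqrt{pq}\,Y}$ is expanded to third order (its cubic term vanishes) with a fourth-order remainder contributing $\frac{t^4}{8}(pq)^2$. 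With that estimate and the corrected envelope, the computation with ${\bf E}|Y|^3=4/\sqrt{2\pi}$ and ${\bf E}Y^4=3$ reproduces $f(p,n)/\sigma^2$ exactly.
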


\begin{proof}  Using the equalities $e^{-pqt^2/2}={\bf
E}e^{it\sqrt{pq}\,Y}$, ${\bf E}(X-p)^j={\bf
E} (Y\sqrt{pq}\, )^j$, $j=0,1,2$, and the Taylor formula, we
get
%
\begin{multline}
\label{t^3+t^4} |f(t)-e^{-pqt^2/2}|\!=\! \Bigg|{\bf E} \Biggl[\sum
_{j=1}^2\frac{ (it(X-p) )^j}{j!} +\frac{ (it(X-p) )^3}{2}\!\int
_0^1\!\!(1-\theta)^2
\,e^{it\theta(X-p)} \,d\theta \Biggr]
\\
-{\bf E} \Biggl[ \sum_{j=1}^3
\frac{1}{j!}\, (it\sqrt{pq}\,Y )^j +\frac{ (it\sqrt{pq}\,Y )^4}{3!} \int
_0^1(1-\theta)^3\, e^{it\theta\sqrt{pq}\,Y}
\,d\theta \Biggr] \Bigg|
\\
= \Bigg|{\bf E} \Biggl[\frac{ (it(X-p) )^3}{2}\int_0^1(1-
\theta)^2\,e^{it\theta(X-p)} \,d\theta
\\
- \frac{ (it\sqrt{pq}\,Y )^4}{3!} \int_0^1(1-
\theta)^3\, e^{it\theta\sqrt{pq}\,Y} \,d\theta \Biggr] \Bigg| \le\frac{|t|^3}{6}
\, \beta_3(p)+\frac{t^4}{8}\,(pq)^2.
\end{multline}

Since for $|x|\le\frac{\pi}{4}$ the inequality $|\sin
x|\ge\frac{2\sqrt{2}\,|x|}{\pi}$ is fulfilled, then with the help of
Lemma~\ref{lem-2-ZNC} we arrive at the following bound for
$|t|\le\pi/2$,
\begin{equation*}
\nonumber
|f(t)|\le\exp \bigl\{-2pq\sin^2(t/2) \bigr\} \le\exp
\biggl\{- \frac{4t^2pq}{\pi^2} \biggr\}.
\end{equation*} Then,
taking into account the elementary equality $ a^n-b^n = (a-b)
\sum\limits_{j=0}^{n-1}a^jb^{n-1-j} $ and the estimate
\eqref{t^3+t^4}, we obtain for $ | t |\le \pi / 2 $ that
\begin{multline*}
|f^n(t)-e^{-npqt^2/2}|\le |f(t)-e^{-pqt^2/2}|\sum
_{j=0}^{n-1}|f(t)|^je^{-(n-1-j)t^2pq/2}
\le
\\
\le \biggl(\frac{|t|^3}{6}\, \beta_3(p)+\frac{t^4}{8}\,
(pq)^{2} \biggr)\,\sum_{j=0}^{n-1}
\exp \bigl\{ \bigl[j\, \bigl(1-8/\pi^2 \bigr)-(n-1)
\bigr]t^2pq/2 \bigr\}\le
\\
\le \biggl(\frac{|t|^3}{6}\, \beta_3(p)+\frac{t^4}{8}\,
(pq)^{2} \biggr)\,n \,\exp \biggl\{-\frac{4(n-1)t^2pq}{\pi^2} \biggr\}.
\end{multline*}
Using the well-known formulas ${\bf E}|Y|^3=\frac{4}{\sqrt{2\pi}}$
and ${\bf E}Y^4=3$, we deduce from the previous inequality that for
$n\ge2$,
%
\begin{multline}
\label{f(p,n>2)}\int\limits
_{|t|\le\pi/2}|f^n(t)-e^{-npqt^2/2}|\,dt\le n\sqrt{2
\pi} \, \biggl(\frac{\beta_3(p)}{6m^2}\,{\bf E}|Y|^3+\frac{(pq)^2}{8m^{5/2}}\,{
\bf E}Y^4 \biggr) \bigg|_{m=\frac{8(n-1)pq}{\pi^2}}
\\
= n \biggl(\frac{\pi^4\varrho(p)}{96\sqrt{pq}\,(n-1)^2} +\frac{3\pi^5\sqrt{\pi}}{2^{10}\sqrt{pq}\,(n-1)^{5/2}} \biggr) =\frac{f(p,n)}{\sigma^2}.
\end{multline}

Applying Lemma \ref{lem-2-ZNC} again, we get
%
\begin{equation}
\label{int_x>pi/6}\int\limits
_{\pi/2\le |t|\le\pi}|f^n(t)|\,dt\le 2\int_{\pi/2}^{\pi}
e^{-2\sigma^2\,\sin^2
(t/2)}dt<\pi\,e^{-\sigma^2}.
\end{equation} Moreover, by virtue of
the known inequality
%
\begin{equation}
\label{norm-0}\int_c^\infty e^{-t^2/2}\,dt
\le \frac{1}{c}\,e^{-c^2/2},
\end{equation} which holds
for every $c>0$, we have
%
\begin{equation}
\label{e^{-nt}}\int_{|t|\ge
\pi/2}e^{-\sigma^2t^2/2}\,dt \le
\frac{4}{\pi
\sigma^2}\,e^{-\sigma^2\pi^2/8}.
\end{equation} Collecting the
estimates \eqref{f(p,n>2)}--\eqref{e^{-nt}}, we obtain the
statement of Lemma \ref{lem_f^n-e^n-10}.
\end{proof}

Denote
\begin{align*}
P_n(k)=C_n^kp^kq^{n-k},
\quad \delta_n(k,p)=P_n(k)-\frac{1}{\sqrt{npq}}\,\varphi
\biggl(\frac{k-np}{\sqrt{npq}} \biggr).
\end{align*}

\begin{lemma}\label{lem_loc} For every $n\ge1$ and $0\le
k\le n$ the following bound holds,
%
\begin{equation}
\label{loc-}|\delta_n(k,p)|<\min \biggl\{\frac{1}{\sigma\sqrt{2e}},
\frac{c_1}{\sigma^2} \biggr\},
\end{equation}
where $c_1$ is defined in \eqref{c1c2c3}.
\end{lemma}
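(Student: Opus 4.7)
The plan is to establish the two upper bounds inside the $\min$ separately. The elementary bound $|\delta_n(k,p)|\le 1/(\sigma\sqrt{2e})$ will dominate for small $\sigma$, while the Fourier-analytic bound $|\delta_n(k,p)|\le c_1/\sigma^2$ will dominate for large $\sigma$.

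For the first bound I would observe that both $P_n(k)$ and $\frac{1}{\sigma}\varphi\bigl(\frac{k-np}{\sigma}\bigr)$ are nonnegative, hence $|\delta_n(k,p)|\le\max\bigl\{P_n(k),\,\frac{1}{\sigma}\varphi\bigl(\frac{k-np}{\sigma}\bigr)\bigr\}$. The normal density is trivially bounded by $1/(\sigma\sqrt{2\pi})<1/(\sigma\sqrt{2e})$. For the binomial term, unimodality lets one reduce to the mode $k^{\ast}=\lfloor(n+1)p\rfloor$, and a Stirling-type estimate yields $P_n(k^{\ast})\le 1/(\sigma\sqrt{2e})$; the extreme cases $k=0,n$ are the most transparent, since $q^n\le e^{-np}$ combined with the elementary inequality $ye^{1-y}\le 1$ (applied at $y=2np$) already gives $q^n\le 1/(\sigma\sqrt{2e})$, and symmetrically for $k=n$.

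For the second bound I would use Fourier inversion, valid for $n\ge 2$. Since $S_n=X_1+\cdots+X_n$ is integer-valued with characteristic function $e^{itnp}f^n(t)$,
\begin{equation*}
P_n(k)=\frac{1}{2\pi}\int_{-\pi}^{\pi}f^n(t)e^{-it(k-np)}\,dt,\qquad
\frac{1}{\sigma}\varphi\Bigl(\tfrac{k-np}{\sigma}\Bigr)=\frac{1}{2\pi}\int_{-\infty}^{\infty}e^{-\sigma^2 t^2/2}e^{-it(k-np)}\,dt,
\end{equation*}
and subtracting gives
\begin{equation*}
|\delta_n(k,p)|\le \frac{1}{2\pi}\int_{|t|\le\pi}|f^n(t)-e^{-\sigma^2 t^2/2}|\,dt+\frac{1}{2\pi}\int_{|t|>\pi}e^{-\sigma^2 t^2/2}\,dt.
\end{equation*}
Lemma \ref{lem_f^n-e^n-10} bounds the first integral by $\frac{1}{\sigma^2}\bigl[f(p,n)+\pi\sigma^2 e^{-\sigma^2}+\tfrac{4}{\pi}e^{-\pi^2\sigma^2/8}\bigr]$, and the Gaussian tail estimate \eqref{norm-0} bounds the second by $\frac{2}{\pi\sigma^2}e^{-\pi^2\sigma^2/2}$. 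The remaining case $n=1$ is disposed of by the trivial inequality $|\delta_1(k,p)|\le 1\le c_1/(pq)=c_1/\sigma^2$, valid since $pq\le 1/4<c_1$.

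The hard part will be the numerical verification that, after division by $2\pi$, the sum of these four terms is at most $c_1=0.516$ uniformly in $n\ge 2$ and $p\in(0,1)$. As $n\to\infty$ the limit is $\pi^3(p^2+q^2)/192\le\pi^3/192\approx 0.162$, well below $c_1$, so the delicate regime is moderate $\sigma$, in particular the smallest $n$ where the factor $(n/(n-1))^{5/2}$ in $f(p,n)$ is largest. To close the argument I would split by $\sigma$: whenever $\sigma\le c_1\sqrt{2e}\approx 1.20$, the first bound already satisfies $1/(\sigma\sqrt{2e})\le c_1/\sigma^2$, so it suffices to verify the Fourier estimate for $\sigma^2\ge 1.45$, a range in which the exponential factors decay rapidly enough to keep the sum below $2\pi c_1\approx 3.24$.
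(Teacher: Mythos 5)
Your proposal follows essentially the same route as the paper: the bound $\frac{1}{\sigma\sqrt{2e}}$ is obtained by bounding each of the two nonnegative terms separately (the paper simply cites Herzog for $P_n(k)\le\frac{1}{\sqrt{2enpq}}$ rather than re-deriving it), and the bound $\frac{c_1}{\sigma^2}$ comes from the same Fourier-inversion decomposition combined with Lemma~\ref{lem_f^n-e^n-10}, verified only for $\sigma$ bounded below (the paper takes $\sigma>1$, forcing $n\ge5$ and $f(p,n)\le f(p,5)<1.707$) and glued to the first bound via the crossover $\frac{1}{\sigma\sqrt{2e}}\le\frac{c_1}{\sigma^2}$ for $\sigma\le c_1\sqrt{2e}>1$. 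One small correction: your claim $|\delta_1(k,p)|\le1$ is false for small $p$, since $\frac{1}{\sqrt{pq}}\,\varphi\bigl(\frac{k-p}{\sqrt{pq}}\bigr)$ is unbounded there; however, the case $n=1$ needs no separate treatment at all, because then $\sigma=\sqrt{pq}\le\frac12<c_1\sqrt{2e}$, so the first bound already implies the second.
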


\begin{proof}
  It was proved in \cite{Herzog} that
$P_n(k)\le\frac{1}{\sqrt{2enpq}}$. Moreover,
$\frac{1}{\sqrt{npq}}\,\varphi (\frac{k-np}{\sqrt{npq}} )\le\frac{1}{\sqrt{2\pi
npq}}$. Hence,
%
\begin{equation}
\label{P-fi-2}|\delta_n(k,p)|\le\frac{1}{\sqrt{2e
npq}}=
\frac{1}{\sigma\sqrt{2e }}.
\end{equation}

Let us find another bound for $\delta_n(k,p)$. Let $\sigma>1$. Then
$n>\frac{1}{pq}\ge4$, i.e. $n\ge5$.

By the inversion formula for integer random variables,
\begin{align*}
P_n(k)=\frac{1}{2\pi}\int_{-\pi}^\pi
\bigl(q+e^{it}p\bigr)^n\,e^{-itk}\,dt=
\frac{1}{2\pi}\int_{-\pi}^\pi f^n(t)
\,e^{-it(k-np)}\,dt.
\end{align*} Moreover,
by the inversion formula for densities,
\begin{align*}
\frac{1}{\sigma}\,\varphi \biggl(\frac{x-\mu}{\sigma} \biggr)= \frac{1}{2\pi}
\int_{-\infty}^\infty e^{-t^2\sigma^2/2-it(x-\mu)}\,dt.
\end{align*} Consequently,
%
\begin{equation}
\label{J1-J2}\delta_n(k,p)=\frac{1}{2\pi} \,
(J_1-J_2 ),
\end{equation} where
\begin{align*}
J_1=\int_{-\pi}^\pi
\bigl[f^n(t)-e^{-\sigma^2t^2/2} \bigr]\,e^{-it(k-np)}\,dt,\quad
J_2=\int_{|t|\ge\pi} e^{-\sigma^2t^2/2}\,e^{-it(k-np)}
\,dt.
\end{align*} Note
that the function $f(p,n)$ from  Lemma \ref{lem_f^n-e^n-10}
decreases in $n$. Hence, $f(p,n)\le f(p,5)$.   It is not hard to
verify that $\max\limits_{p\in[0,1]}f(p,5)<1.707$. Thus, for
$\sigma>1$,
\begin{align*}
|J_1|\le \frac{1}{\sigma^2} \biggl(1.707+\frac{\pi}{e}+
\frac{4}{\pi}\,e^{-\pi^2/8} \biggr)<\frac{3.234}{\sigma^2}.
\end{align*} Using
inequality \eqref{norm-0}, we get the estimate
\begin{align*}
|J_2|\le \frac{2}{\pi\sigma^2}\,e^{-\pi^2\sigma^2/2}<\frac{0.005}{\sigma^2}.
\end{align*}
Thus, we get from~\eqref{J1-J2} that for $\sigma>1$,
%
\begin{equation}
\label{P-fi} |\delta_n(k,p)| \le \frac{3.24}{2\pi\sigma^2}<
\frac{0.516}{\sigma^2}.
\end{equation}

Since $\frac{1}{\sigma\sqrt{2e}}\le \frac{c_1}{\sigma^2}$ for
$0<\sigma\le c_1\sqrt{2e}=1.203\ldots\;>1$, the statement of Lemma
\ref{lem_loc} follows from \eqref{P-fi-2} and \eqref{P-fi}.
\end{proof}

\begin{lemma}\label{lem_d/dpG} The following equality holds,
%
\begin{equation}
\label{d/dpG}\frac{\partial}{\partial
p}\,G_{n,p}(x)=-\frac{x(1-2p)+np}{2pq\sqrt{npq}}\,
\varphi \biggl(\frac{x-np}{\sqrt{npq}} \biggr).
\end{equation}
\end{lemma}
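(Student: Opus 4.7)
The plan is a straightforward differentiation: since $G_{n,p}(x)=\varPhi\bigl(\frac{x-np}{\sqrt{npq}}\bigr)$, the chain rule reduces everything to computing $\frac{\partial}{\partial p}\bigl(\frac{x-np}{\sqrt{npq}}\bigr)$, with $\varPhi'=\varphi$ pulled outside. No analytic subtlety is involved; the only thing to watch is the algebraic simplification at the end.

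First I would write $u(p)=x-np$ and $v(p)=\sqrt{np(1-p)}$, noting $u'(p)=-n$ and, from $v^2=np-np^2$, that $2vv'=n(1-2p)$, so $v'(p)=\frac{n(1-2p)}{2\sqrt{npq}}$. Then by the quotient rule,
\begin{align*}
\frac{\partial}{\partial p}\frac{u}{v}=\frac{u'v-uv'}{v^2}=\frac{-n\sqrt{npq}-(x-np)\cdot\frac{n(1-2p)}{2\sqrt{npq}}}{npq}.
\end{align*}
Multiplying numerator and denominator by $2\sqrt{npq}$ gives $\frac{\partial}{\partial p}\bigl(\frac{x-np}{\sqrt{npq}}\bigr)=-\frac{2npq+(x-np)(1-2p)}{2pq\sqrt{npq}}$.

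Next I would simplify the bracket in the numerator using $p+q=1$:
\begin{align*}
2npq+(x-np)(1-2p)=x(1-2p)+np\bigl[2q-(1-2p)\bigr]=x(1-2p)+np,
\end{align*}
since $2q-1+2p=2(p+q)-1=1$. Combining this with the chain rule yields exactly \eqref{d/dpG}. The main (and only) obstacle is the bookkeeping in the last algebraic step, which is why I would isolate the cancellation $2q+2p-1=1$ explicitly rather than expand blindly.
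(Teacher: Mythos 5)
Your proof is correct and takes essentially the same approach as the paper: a direct chain-rule computation of $\frac{\partial}{\partial p}\bigl(\frac{x-np}{\sqrt{npq}}\bigr)$, differing only in bookkeeping (you apply the quotient rule to the whole ratio, while the paper splits it as $\frac{x}{\sqrt{n}}(pq)^{-1/2}-\sqrt{n}\,p^{1/2}q^{-1/2}$ and differentiates each piece). Both routes land on the same simplification $x(q-p)+np=x(1-2p)+np$.
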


\begin{proof} We have
\begin{align*}
&\frac{d}{d p}p^{-1/2}(1-p)^{-1/2}=-\frac{q-p}{2pq\sqrt{pq}},
\\
&\frac{d}{d
p}p^{1/2}(1-p)^{-1/2}=\frac{1}{2}p^{-1/2}(1-p)^{-1/2}+
\frac{1}{2}p^{1/2}(1-p)^{-3/2} =\frac{1}{2q\sqrt{pq}}.
\end{align*} Hence,
\begin{align*}
\frac{\partial}{\partial p}\,\frac{x-np}{\sqrt{npq}}=-\frac{x(q-p)}{2pq\sqrt{npq}}- \frac{\sqrt{n}}{2q\sqrt{pq}}=-
\frac{x(q-p)+np}{2pq\sqrt{npq}},
\end{align*} and
we arrive at \eqref{d/dpG}. \end{proof}

\begin{lemma}\label{lem_loc-2}  For all  $n\ge1$ and $0\le
k\le n$ the following bound holds,
\begin{align*}
\bigg|\frac{\partial}{\partial p}\,F_{n,p}(k+1)- \frac{\partial}{\partial p}
\,G_{n,p}(k) \bigg|\le L_1(p)\equiv\frac{1}{pq} \biggl(
\frac{c_1}{q}+ c_2 \biggr).
\end{align*}
\end{lemma}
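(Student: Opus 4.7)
The plan is to compute both derivatives explicitly, decompose the difference so as to isolate the local-limit-theorem error $\delta_n(k,p)$, and then estimate each piece separately using Lemma \ref{lem_loc} together with a pointwise bound on $|z|\varphi(z)$.

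First I would establish the identity
$$\frac{\partial}{\partial p}F_{n,p}(k+1) = -n\,C_{n-1}^{k}\,p^{k}q^{n-1-k} = -\frac{n-k}{q}\,P_n(k).$$
Differentiating $\sum_{j=0}^{k} C_n^j p^j q^{n-j}$ term by term and using $jC_n^j = nC_{n-1}^{j-1}$ and $(n-j)C_n^j = nC_{n-1}^{j}$, the two sums telescope, leaving only the single boundary term shown.

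Combining this with Lemma \ref{lem_d/dpG} and writing $P_n(k) = \delta_n(k,p) + \frac{1}{\sqrt{npq}}\,\varphi_0$, where $\varphi_0 = \varphi\bigl((k-np)/\sqrt{npq}\bigr)$, the difference rearranges as
$$\frac{\partial F_{n,p}(k+1)}{\partial p} - \frac{\partial G_{n,p}(k)}{\partial p} = -\frac{n-k}{q}\,\delta_n(k,p) + \frac{k-np}{2pq\sqrt{npq}}\,\varphi_0,$$
once one checks the algebraic cancellation $-2p(n-k) + k(1-2p) + np = k-np$. This cancellation is what makes the smooth residual carry the factor $k-np$ instead of something of order $n$, and is the step I expect to be the main obstacle; without it the second summand would dominate like $\sqrt{n}/\sqrt{pq}$ and no bound independent of $n$ could be obtained.

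Finally I would bound the two summands. For the first, Lemma \ref{lem_loc} gives $|\delta_n(k,p)| < c_1/(npq)$, so it is at most $\frac{(n-k)c_1}{npq^{2}} \le \frac{c_1}{pq^{2}}$. For the second, setting $z=(k-np)/\sqrt{npq}$ rewrites the term as $\frac{|z|\varphi(z)}{2pq}$; since $|z|\varphi(z)$ attains its maximum $1/\sqrt{2\pi e}$ at $|z|=1$, this is at most $\frac{1}{2pq\sqrt{2\pi e}} < \frac{c_2}{pq}$, using the numerical check $1/(2\sqrt{2\pi e}) = 0.12099\ldots < 0.121 = c_2$. Adding the two bounds gives $L_1(p) = \frac{1}{pq}\bigl(\frac{c_1}{q}+c_2\bigr)$.
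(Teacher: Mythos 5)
Your proposal is correct and follows essentially the same route as the paper: the identity $\frac{\partial}{\partial p}F_{n,p}(k+1)=-\frac{n-k}{q}P_n(k)$ (which the paper quotes from Schmetterer rather than rederiving), the split of $P_n(k)$ into $\delta_n(k,p)$ plus the Gaussian density, the cancellation producing the factor $k-np$, and the two bounds via Lemma \ref{lem_loc} and $\max_z|z|\varphi(z)=1/\sqrt{2\pi e}$. No substantive differences.
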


\begin{proof}  It is shown in \cite{Shmet} that
\begin{align*}
\frac{\partial}{\partial
p}\,F_{n,p}(k+1)=-nC_{n-1}^kp^kq^{n-1-k}=-
\frac{n-k}{q}P_n(k). %
\end{align*}
By Lemma \ref{lem_loc},
%
\begin{equation}
\label{P-fi-3}\frac{n-k}{q}\; \bigg|P_n(k)- \frac{1}{\sigma}\,
\varphi \biggl(\frac{k-np}{\sigma} \biggr) \bigg|\le \frac{n\,c_1}{q\sigma^2}=
\frac{c_1}{pq^2}.
\end{equation} In turn,
it follows from Lemma \ref{lem_d/dpG} that
%
\begin{multline}
\label{fi-fi}\frac{n-k}{q\sigma} \,\varphi \biggl(\frac{k-np}{\sigma} \biggr)+
\frac{\partial}{\partial
p}\,G_{n,p}(k)
\\
= \biggl(\frac{n-k}{q\sigma} - \frac{k(1-2p)+np}{2pq\sigma} \biggr)\, \,\varphi \biggl(
\frac{k-np}{\sigma} \biggr)=- \frac{k-np}{2pq\sigma}\,\varphi \biggl(\frac{k-np}{\sigma}
\biggr).
\end{multline}
Since
\begin{multline*}
\frac{\partial}{\partial p}\,F_{n,p}(k+1)- \frac{\partial}{\partial
p}
\,G_{n,p}(k)= -\frac{n-k}{q}\; \biggl[P_n(k)-
\frac{1}{\sigma}\,\varphi \biggl(\frac{k-np}{\sigma} \biggr) \biggr]
\\
- \biggl[\frac{n-k}{q\sigma}\, \varphi \biggl(\frac{k-np}{\sigma} \biggr)+
\frac{\partial}{\partial p}\,G_{n,p}(k) \biggr]
\end{multline*} and
$\max\limits_{x}|x|\varphi(x)=\frac{1}{\sqrt{2\pi e}}<0.242$, the
statement of the lemma follows from~\eqref{P-fi-3}
and~\eqref{fi-fi}.
\end{proof}

\begin{lemma}
\label{lem_loc-3} For all  $n\ge1$ and $0\le k\le n$ the following
bound holds,
\begin{align*}
\bigg|\frac{\partial}{\partial p}\,F_{n,p}(k)- \frac{\partial}{\partial p}
\,G_{n,p}(k) \bigg|\le L_2(p)\equiv\frac{1}{pq} \biggl(
\frac{c_1}{p}+ c_2 \biggr),
\end{align*} where $c_1$,
$c_2$ are from \eqref{c1c2c3}.
\end{lemma}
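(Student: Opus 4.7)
\medskip\noindent\textbf{Proof plan.}
The first thing to notice is that $L_2(p)$ is obtained from $L_1(p)$ of Lemma~\ref{lem_loc-2} by swapping $p$ and $q$. This strongly suggests that Lemma~\ref{lem_loc-3} should follow from Lemma~\ref{lem_loc-2} by a symmetry argument exchanging successes and failures, i.e.\ $X_i \mapsto 1-X_i$; my plan is to carry this reduction out.

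First I would record the two reflection identities
$$F_{n,p}(k) = 1 - F_{n,q}(n-k+1), \qquad G_{n,p}(k) = 1 - G_{n,q}(n-k).$$
The first comes from the distributional identity $n - S_n \sim \mathrm{Bin}(n,q)$ (with $S_n=\sum_i X_i$), together with the left-continuity of $F$, which is what forces the shift by~$1$; the second comes from $\Phi(-x) = 1 - \Phi(x)$ applied to the standardization in $G_{n,p}$. Subtracting the two identities and differentiating in $p$, using $dq/dp = -1$ to convert $\partial/\partial p$ on the right into $\partial/\partial q$, yields
$$\frac{\partial}{\partial p}\bigl[F_{n,p}(k) - G_{n,p}(k)\bigr] = \frac{\partial}{\partial q}\bigl[F_{n,q}(n-k+1) - G_{n,q}(n-k)\bigr].$$
Since $0 \le n-k \le n$, the right-hand side is precisely the quantity bounded by Lemma~\ref{lem_loc-2} applied with parameter~$q$ in place of~$p$ and index~$n-k$ in place of~$k$; this bounds its absolute value by $L_1(q) = \frac{1}{qp}\bigl(\frac{c_1}{p}+c_2\bigr) = L_2(p)$, which is the claim.

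The only delicate point will be the bookkeeping of mismatched arguments: Lemma~\ref{lem_loc-2} controls the derivative of $F_{n,p}(k+1) - G_{n,p}(k)$, not of $F_{n,p}(k) - G_{n,p}(k)$, so I need to verify that the off-by-one shift produced by the reflection identity for $F$ matches exactly the off-by-one built into Lemma~\ref{lem_loc-2} -- this is what dictates the choice $n-k$ (rather than $n-k-1$) as the new index. Should a self-contained argument be preferred instead, one can imitate the proof of Lemma~\ref{lem_loc-2} directly using the dual form $\frac{\partial}{\partial p}F_{n,p}(k) = -\frac{k}{p}\,P_n(k)$ of Shmetterer's identity: then Lemma~\ref{lem_loc} together with $k \le n$ yields $\frac{k}{p}\bigl|P_n(k) - \frac{1}{\sigma}\varphi(\frac{k-np}{\sigma})\bigr| \le \frac{c_1}{p^2 q}$, while by Lemma~\ref{lem_d/dpG} the remainder collapses to $\frac{k-np}{2pq\sigma}\,\varphi\bigl(\frac{k-np}{\sigma}\bigr)$ (using the algebraic identity $\frac{k}{p} - \frac{k(1-2p)+np}{2pq} = \frac{k-np}{2pq}$), which is bounded by $\frac{c_2}{pq}$ via $\max_x|x|\varphi(x) < 2c_2$.
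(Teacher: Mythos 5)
Your primary route is correct but genuinely different from the paper's. The paper proves Lemma~\ref{lem_loc-3} by repeating the computation of Lemma~\ref{lem_loc-2} with the dual identity $\frac{\partial}{\partial p}F_{n,p}(k)=-nC_{n-1}^{k-1}p^{k-1}q^{n-k}=-\frac{k}{p}P_n(k)$, bounding $\frac{k}{p}\lvert P_n(k)-\frac{1}{\sigma}\varphi(\frac{k-np}{\sigma})\rvert\le\frac{c_1}{p^2q}$ via Lemma~\ref{lem_loc} and the residual term $\frac{k-np}{2pq\sigma}\varphi(\frac{k-np}{\sigma})$ by $\frac{0.121}{pq}$ --- this is exactly your ``self-contained'' fallback, including the algebraic collapse of the $\varphi$-terms. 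Your preferred reflection argument ($S_n\mapsto n-S_n$, $p\mapsto q$) is not what the paper does, but it checks out: the identities $F_{n,p}(k)=1-F_{n,q}(n-k+1)$ and $G_{n,p}(k)=1-G_{n,q}(n-k)$ are correct under the left-continuity convention, the off-by-one shift lands precisely on the index pairing $(k+1,k)$ built into Lemma~\ref{lem_loc-2}, the chain rule gives the sign you state, and $L_1(q)=L_2(p)$; since Lemmas~\ref{lem_loc} and~\ref{lem_loc-2} hold for all $p\in(0,1)$ (nothing in their proofs uses $p\le 0.5$), applying Lemma~\ref{lem_loc-2} at the parameter $q$ is legitimate. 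The symmetry route buys economy --- no second computation, and it makes transparent why $L_2$ is $L_1$ with $p$ and $q$ swapped --- at the cost of the bookkeeping you flag; the paper's direct route is longer but keeps both lemmas on the same explicit footing.
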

\begin{proof} Similarly to the proof of Lemma \ref{lem_loc-2} we
obtain
\begin{align}
&\frac{\partial}{\partial
p}\,F_{n,p}(k)=-nC_{n-1}^{k-1}p^{k-1}q^{n-k}=-
\frac{k}{p}P_n(k),
\nonumber\\
%
\label{P-fi-1}&\frac{k}{p}\; \bigg|P_n(k)- \frac{1}{\sigma}\,
\varphi \biggl(\frac{k-np}{\sigma} \biggr) \bigg|\le \frac{k\,c_1}{p\sigma^2}\le
\frac{c_1}{p^2q}.
\end{align}
Hence,
\begin{align*}
\frac{\partial}{\partial p}\,F_{n,p}(k)- \frac{\partial}{\partial
p}\,G_{n,p}(k)=
-\frac{k}{p}\; \biggl[P_n(k)- \frac{1}{\sigma}\,\varphi
\biggl(\frac{k-np}{\sigma} \biggr) \biggr]- \frac{k-np}{2pq\sigma}\varphi \biggl(
\frac{k-np}{\sigma} \biggr).
\end{align*}
Since the last summand on the right-hand side of the equality is
less than $\frac{0.121}{pq}$, then by using~\eqref{P-fi-1} we get
the statement of the lemma. \end{proof}

\begin{lemma}\label{lem-2.8} For every $0<p<0.5$,
%
\begin{equation}
\frac{d}{dp}\frac{1}{\varrho(p)}=\frac{1}{2}\,A(p):=
\frac{1}{2}\,\frac{(1-2p)(1+2pq)}{\sqrt{pq}(1-2pq)^2}.\label{(2.8.0)}
\end{equation}
\end{lemma}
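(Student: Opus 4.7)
My plan is to reduce $1/\varrho(p)$ to a simple rational expression in $p$ and $q$, and then apply the quotient rule directly.

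The first step is to rewrite $\varrho(p)$. Using $q=1-p$, observe that
\[
p^2+q^2 = p^2+(1-p)^2 = 1-2p(1-p) = 1-2pq,
\]
so that $\varrho(p) = (1-2pq)/\sqrt{pq}$ and therefore
\[
\frac{1}{\varrho(p)} = \frac{\sqrt{pq}}{1-2pq}.
\]
This is the cleanest form for differentiation, since both numerator and denominator depend on $p$ only through the product $pq$ and its square root.

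Next I would record the two derivatives $\frac{d}{dp}(pq)=1-2p$ and $\frac{d}{dp}\sqrt{pq}=\frac{1-2p}{2\sqrt{pq}}$, and apply the quotient rule. After factoring $(1-2p)$ out, a common denominator $2\sqrt{pq}$ collects the bracketed quantity $(1-2pq)+4pq = 1+2pq$, and the resulting expression matches $\tfrac{1}{2}A(p)$ exactly. Namely,
\[
\frac{d}{dp}\,\frac{\sqrt{pq}}{1-2pq}
= \frac{\frac{1-2p}{2\sqrt{pq}}(1-2pq) + 2(1-2p)\sqrt{pq}}{(1-2pq)^2}
= \frac{(1-2p)(1+2pq)}{2\sqrt{pq}(1-2pq)^2}.
\]

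There is no real obstacle here; the lemma is a routine calculus exercise. The only mildly nontrivial step is the algebraic identity $p^2+q^2=1-2pq$, which simplifies $\varrho$ enough that the quotient rule yields the stated form without any further manipulation. The assumption $0<p<0.5$ merely ensures that $pq>0$ and $1-2pq>0$, so that all quantities involved are well defined and smooth.
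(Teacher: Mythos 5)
Your proof is correct and follows essentially the same route as the paper: both start from the identity $p^2+q^2=1-2pq$ to write $1/\varrho(p)=\sqrt{pq}/(1-2pq)$ and then differentiate. The only (cosmetic) difference is that the paper organizes the derivative via the chain rule through $x=\sqrt{pq}$ applied to $x/(1-2x^2)$, whereas you apply the quotient rule directly in $p$; the computations coincide.
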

\begin{proof}   The lemma follows from the equalities:
\begin{align*}
&\frac{d}{dp}\,\frac{1}{\varrho(p)}= \frac{d}{dx}\,\frac{x}{1\,{-}\,2x^2}
\bigg|_{x=\sqrt{pq}}\times\frac{d}{dp}\sqrt{p(1\,{-}\,p)},\;\;\frac{d}{dp}
\sqrt{p(1\,{-}\,p)}=\frac{1-2p}{2\sqrt{pq}},
\\
& \frac{d}{dx}\,\frac{x}{1-2x^2}=\frac{1}{1-2x^2} +
\frac{4x^2}{(1-x^2)^2}=\frac{1+2x^2}{(1-2x^2)^2}.\qedhere
\end{align*}
\end{proof}

\begin{lemma}\label{lem-2.9} The function $A(p)$ decreases
on the interval $(0,0.5)$.\end{lemma}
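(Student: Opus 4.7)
The plan is to reduce the problem to a one-variable inequality via the substitution $u = pq = p(1-p)$. Since $du/dp = 1-2p > 0$ on $(0,0.5)$, the map $p \mapsto u$ is a strictly increasing bijection from $(0,0.5)$ to $(0,1/4)$, and on this interval $1 - 2p = \sqrt{1-4u}$. Therefore
\[
A(p) = B(u) := \frac{\sqrt{1-4u}\,(1+2u)}{\sqrt{u}\,(1-2u)^{2}},
\]
and monotone decrease of $A$ in $p$ is equivalent to $B'(u) < 0$ on $(0, 1/4)$.

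The next step is to take the logarithmic derivative
\[
\frac{B'(u)}{B(u)} \;=\; -\frac{2}{1-4u} + \frac{2}{1+2u} - \frac{1}{2u} + \frac{4}{1-2u},
\]
put it over the positive common denominator $2u(1+2u)(1-4u)(1-2u)$, and expand. A direct (but lengthy) calculation shows that the resulting numerator equals $-P(u)$, where
\[
P(u) \;=\; 32u^{3} + 36u^{2} - 12u + 1.
\]
Thus the lemma reduces to showing $P(u) > 0$ for all $u \in (0,1/4)$.

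To finish, I would analyze $P$ on $(0,1/4)$ via elementary calculus. We have $P(0) = 1$ and $P(1/4) = 3/4$, both positive. Since $P'(u) = 12(8u^{2} + 6u - 1)$, the unique critical point of $P$ in $(0,1/4)$ is $u_{\ast} = (-3+\sqrt{17})/8 \approx 0.1404$, a local minimum. Using the defining relation $8u_{\ast}^{2} + 6u_{\ast} = 1$ to reduce the powers of $u_{\ast}$ in $P(u_{\ast})$, one obtains the closed form $P(u_{\ast}) = (71 - 17\sqrt{17})/8$, which is positive because $71^{2} = 5041 > 4913 = 17^{3}$. Hence $P > 0$ on $[0,1/4]$, $B'(u) < 0$ on $(0,1/4)$, and the lemma follows.

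The only real obstacle is the algebraic expansion leading to the cubic $P(u)$ and the arithmetic verification that its minimum value in the interval is positive; both are routine but error-prone. An alternative framing worth noting is that $A(p) = 2\,\frac{d}{dp}\frac{1}{\varrho(p)}$ by Lemma~\ref{lem-2.8}, so the statement is equivalent to strict concavity of $1/\varrho(p) = \sqrt{pq}/(1-2pq)$ on $(0,0.5)$; this reformulation does not shorten the proof but provides a useful sanity check.
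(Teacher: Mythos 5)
Your proposal is correct and follows essentially the same route as the paper: the substitution $u=pq$, the logarithmic derivative yielding the cubic $32u^{3}+36u^{2}-12u+1$ over the positive denominator $2u(1-4u)(1-4u^{2})$, and positivity of that cubic at its unique interior minimum $(-3+\sqrt{17})/8$. Your closed form $(71-17\sqrt{17})/8$ for the minimum value (the paper just reports $0.11\ldots$) is a nice exact touch, but the argument is otherwise the same.
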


\begin{proof} Denote $x=x(p)=p(1-p)$, $A_1(t)=
\frac{\sqrt{1-4t}\,(1+2t)}{\sqrt{t}\,(1-2t)^2}$. Taking into account
the equality $1-2p=\sqrt{1-4pq}$, we obtain $A(p)=A_1(x)$.

Since $x(p)$ increases for $0<p<0.5$, it remains to prove the
decrease of the function $A_1(x)$ for $0< x<0.25$. We have
\begin{align*}
\frac{d}{dx} \,\ln{A_1(x)}=\frac{-2}{1-4x}+
\frac{2}{1+2x}-\frac{1}{2x}+\frac{4}{1-2x} =-\frac{32x^3+36x^2-12x+1 }{2x(1-4x)(1-4x^2)}.
\end{align*} On the interval
$[0,0.25]$ the polynomial $A_2(x)\equiv32x^3+36x^2-12x+1$ has the
single minimum point $x_1=\frac{-3+\sqrt{17}}{8}=0.140\ldots\;$.
Since $A_2(x_1)=0.11\ldots>0$, we have $\frac{d}{dx}\,\ln A_1(x)<0$
 for $0\le x<0.25$, i.e.  the function $A_1(x)$ decreases on $(0,0.25)$.
 The lemma is proved.\end{proof}

\begin{lemma}\label{lem-L(p)ubyv} The function $L(p)$,
defined in  \eqref{L(p)-new}, decreases on $[0,0.5]$.\end{lemma}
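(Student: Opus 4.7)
My plan is to clear denominators in \eqref{L(p)-new}, rewriting $L(p)=P(p)/D(p)$ with $P$ a polynomial of degree $4$ in $p$ and $D$ a positive smooth function, and then verify $L'(p)<0$ on $(0,1/2]$ by showing separately that $P>0$, $P'<0$, $D>0$, and $D'>0$ on this interval.

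First, multiplying the numerator and denominator of \eqref{L(p)-new} by $p(1-2pq)$ and using $pq=p(1-p)$ together with the identity $(1-2p)(1+2pq)=1-6p^2+4p^3$, one arrives at
\[
L(p)=\frac{P(p)}{D(p)},\qquad D(p):=p^{3/2}(1-p)^{1/2}(1-2p+2p^2)^2,
\]
\[
P(p)=c_1+(c_2+c_3-2c_1)p+2(c_1-c_2)p^2+(2c_2-6c_3)p^3+4c_3\,p^4.
\]
Positivity of $D$ on $(0,1)$ is immediate. For $D'>0$, logarithmic differentiation gives $D'(p)/D(p)=h(p)/[2p(1-p)(1-2p+2p^2)]$ with $h(p):=3-18p+38p^2-24p^3$. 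Since $h''(p)=76-144p>0$ on $[0,1/2]$, $h$ is convex there with a unique interior critical point $p^*=(19-\sqrt{37})/36$; using $h'(p^*)=0$ to eliminate the cubic term reduces $h(p^*)$ to the quadratic $3-12p^*+\frac{38}{3}p^{*2}$ in $p^*$, whose discriminant $144-152=-8<0$ combined with positive leading coefficient $\frac{38}{3}$ forces $h(p^*)>0$, and hence $D'>0$ on $(0,1/2]$.

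For $P'$, I compute $P''(p)=4(c_1-c_2)+(12c_2-36c_3)p+48c_3p^2$ and check that its discriminant $144(c_2-3c_3)^2-768c_3(c_1-c_2)$ is negative for the values \eqref{c1c2c3} (concretely, $\approx 68.96-82.21<0$), while the leading coefficient $48c_3$ is positive. So $P''>0$ identically, making $P'$ strictly convex. Since $P'(0)=c_2+c_3-2c_1=-0.640<0$ and $P'(1/2)=(c_2-3c_3)/2=-0.346<0$, convexity forces $P'(p)\le\max\{P'(0),P'(1/2)\}<0$ throughout $[0,1/2]$. Hence $P$ is strictly decreasing on this interval, and $P(1/2)=c_1/2+c_2/4>0$, so $P>0$ on $[0,1/2]$. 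Combining these four inequalities gives $L'(p)=(P'D-PD')/D^2<0$ on $(0,1/2]$, as required.

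The main obstacle is verifying positivity of $h$ and of $P''$: each reduces to a quadratic discriminant being strictly negative, and both depend on the specific numerical values of $c_1,c_2,c_3$ fixed in \eqref{c1c2c3}. Checking that the margins are comfortable is arithmetic, but no deeper structural argument is needed beyond the convexity trick that isolates $h$'s interior minimum and the observation that $P'$ is convex with both endpoint values negative.
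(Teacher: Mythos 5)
Your proof is correct, but it follows a genuinely different route from the paper's. The paper splits $L(p)=\frac{1}{\varrho(p)}L_2(p)+c_3A(p)$ (using the identity $p^2+q^2=1-2pq$) and proves each summand decreases separately: the decrease of $A(p)$ is a standalone lemma (needed again later, in the proof of Lemma~\ref{lem-2.10}, to bound $\frac{d}{dp}\frac{1}{\varrho(p)}$), while the decrease of $L_3(p)=\frac{c_1+c_2p}{p\sqrt{pq}(1-2pq)}$ is reduced via logarithmic differentiation to the negativity of a quartic $A_3(p)$, verified by numerically locating the roots of $A_3'$ and $A_3''$. You instead write $L=P/D$ as a single fraction and check the four sign conditions $P>0$, $P'<0$, $D>0$, $D'>0$; your algebra for $P$, $D$, $h$, and the endpoint values checks out, and each positivity/negativity claim reduces to a quadratic discriminant or a monotonicity argument with comfortable numerical margins, which is arguably cleaner than the paper's root-hunting for $A_3$. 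What the paper's decomposition buys is reuse: the monotonicity of $A$ does double duty elsewhere, whereas your argument is self-contained but single-purpose. One small slip: $P''>0$ makes $P$ strictly convex and $P'$ strictly \emph{increasing}, not convex (indeed $P'''$ changes sign on $[0,1/2]$); your conclusion survives unchanged, since an increasing $P'$ satisfies $P'(p)\le P'(1/2)=(c_2-3c_3)/2<0$ on $[0,1/2]$, which is all you need.
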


\begin{proof} Taking into account the equality $p^2+q^2=1-2pq$, it is
not difficult to see that
%
\begin{equation}
\label{L(p)-new2}L(p)=\frac{1}{\varrho(p)}\,L_2(p)+c_3\,A(p).
\end{equation}
According to Lemma \ref{lem-2.9}, the function $A(p)$ decreases.
Consequently, it remains to prove that the function
$L_3(p):=\frac{1}{\varrho(p)}\,L_2(p)=\frac{c_1+c_2p}{p\sqrt{pq}(1-2pq)}$
decreases on  $[0,0.5]$. We have
\begin{multline*}
\frac{d}{dp}\,\ln L_3(p)=\frac{c_2}{c_1+c_2p}-
\frac{3}{2p}+\frac{1}{2(1-p)}+\frac{2(1-2p)}{
1-2p+2p^2}
\\
=
\frac{A_3(p)}{2pq(c_1+c_2p)(1-2pq)},
\end{multline*}
where $A_3(p)=-3 c_1 + (14 c_1  - c_2) p - (26 c_1-8c_2) p^2 + (16
c_1 -
 18 c_2) p^3 + 12 c_2 p^4$. Let us prove that
%
\begin{equation}
\label{A3(p)<0} A_3(p)<0,\quad 0<p<0.5.
\end{equation}
We have
\begin{align*}
&A_3^\prime(p)=14 c_1 - c_2 -4 (13
c_1-4c_2) p +6 (8 c_1 - 9 c_2)
p^2 + 48 c_2 p^3,
\\
&A_3^{\prime\prime}(p)= -4 (13 c_1-4c_2) +12
(8 c_1 - 9 c_2) p + 144 c_2 p^2.
\end{align*}
As a result of calculations, we find that the equation
$A_3^{\prime}(p)=0$ has the single root $p_0=0.478287\ldots$ on
$[0,0.5]$. The roots of the equation $A_3^{\prime\prime}(p)=0$ have
the form
\begin{align*}
p_{1,2}=\frac{1}{24c_2}\, \bigl(-8c_1+9c_2
\pm\sqrt{(8c_1-9c_2)^2+16c_2(13c_1-4c_2)}
\, \bigr),
\end{align*}
and are equal to $p_1=-2.6\ldots\;$, $p_2=0.54\ldots\;$
respectively. Hence, $A_3^{\prime\prime}(p)<0$ for $p\in[0,0.5]$.
Thus, the function $A_3(p)$, considered on $[0,0.5]$, takes a
maximum value at the point $p_0$. Since
\mbox{$A_3(p_0)=-0.257\ldots\;$},  inequality~\eqref{A3(p)<0} is
proved. This implies that $L_3(p)$ decreases on
$(0,0.5)$.\end{proof}

Let $f(x)$ be an arbitrary function. Denote by $D^+f(x)$ and
$D^-f(x)$ its right-side and left-side derivatives respectively (if
they exist).
\begin{lemma}\label{lem-f11} Let
$g(x)=\max\{f_{1}(x),\,f_{2}(x)\}$, where  $f_1(x)$ and $f_2(x)$
are functions, differentiable on a finite interval $(a,b)$. Then at
every point $x\in(a,b)$ there exist both one-side derivatives $D^+
g(x)$ and $D^- g(x)$, each of which coincides with either
$f^\prime_{1}(x)$ or $f^\prime_{2}(x)$. \end{lemma}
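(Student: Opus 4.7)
Fix $x_0\in(a,b)$. The plan is to split into two cases according to whether $f_1(x_0)=f_2(x_0)$ or not, and in each case to pass from $g$ to one of the $f_i$ via an elementary continuity/algebraic argument.

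\textbf{Case 1: $f_1(x_0)\neq f_2(x_0)$.} Assume without loss of generality that $f_1(x_0)>f_2(x_0)$. Since $f_1$ and $f_2$ are differentiable, they are continuous, hence the strict inequality $f_1>f_2$ persists on some open neighborhood $U\subset(a,b)$ of $x_0$. On $U$ we have $g\equiv f_1$, so both one-sided derivatives of $g$ at $x_0$ exist and coincide with $f_1'(x_0)$.

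\textbf{Case 2: $f_1(x_0)=f_2(x_0)$.} Let $c:=f_1(x_0)=f_2(x_0)=g(x_0)$. For any real $h$ with $x_0+h\in(a,b)$,
\begin{align*}
g(x_0+h)-g(x_0)=\max\{f_1(x_0+h),f_2(x_0+h)\}-c=\max\{f_1(x_0+h)-c,\,f_2(x_0+h)-c\}.
\end{align*}
I would then use the trivial identities $\max\{a,b\}/h=\max\{a/h,b/h\}$ for $h>0$ and $\max\{a,b\}/h=\min\{a/h,b/h\}$ for $h<0$. This gives
\begin{align*}
\frac{g(x_0+h)-g(x_0)}{h}=\max\!\left\{\frac{f_1(x_0+h)-f_1(x_0)}{h},\frac{f_2(x_0+h)-f_2(x_0)}{h}\right\}\quad(h>0),
\end{align*}
and the analogous expression with $\min$ in place of $\max$ for $h<0$. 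Since each difference quotient tends to $f_i'(x_0)$ and $\max$ (resp.\ $\min$) is continuous, letting $h\to 0^+$ and $h\to 0^-$ yields
\begin{align*}
D^+g(x_0)=\max\{f_1'(x_0),f_2'(x_0)\},\qquad D^-g(x_0)=\min\{f_1'(x_0),f_2'(x_0)\}.
\end{align*}
Each of these equals either $f_1'(x_0)$ or $f_2'(x_0)$, as required.

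The argument is essentially elementary; the only subtlety is the sign flip when dividing by $h<0$, which swaps $\max$ and $\min$ in the difference quotient. Once that is handled correctly, continuity of $\max$ and $\min$ of two arguments gives the limits immediately, and no further machinery is needed.
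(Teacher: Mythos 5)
Your proof is correct, and it takes a genuinely different (and slightly slicker) route than the paper's in the case $f_1(x_0)=f_2(x_0)$. The paper splits that case further according to whether $f_1'(x_0)=f_2'(x_0)$: when the derivatives differ it uses the first-order Taylor expansions to show that $g$ coincides with one fixed $f_i$ on each side of $x_0$ for small $h$, and when they coincide it reads off differentiability of $g$ directly from the common expansion. You avoid this sub-case split entirely by exploiting the identity that division by $h>0$ commutes with $\max$ while division by $h<0$ turns $\max$ into $\min$, together with continuity of $\max$ and $\min$; this yields the explicit formulas $D^+g(x_0)=\max\{f_1'(x_0),f_2'(x_0)\}$ and $D^-g(x_0)=\min\{f_1'(x_0),f_2'(x_0)\}$, which is in fact a sharper conclusion than the lemma asserts. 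What the paper's argument buys in exchange is the explicit local description of $g$ (which branch it follows on each side of $x_0$), but that extra information is not used elsewhere, so your argument fully suffices for the application in Lemma 10 of the paper.
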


\begin{proof} Let  $x$ be a point such that
 $f_{1}(x)\neq f_{2}(x)$. Then the function $g$  is differentiable at~$x$, and in this case the statement of the lemma is trivial.

Now let for a point $x\in(a,b)$,
%
\begin{equation}
\label{f1=f2}f_{1}(x)=f_{2}(x).
\end{equation}

First, consider the case $f_1^\prime(x)\neq f_2^\prime(x)$. Let, for
instance, $f_1^\prime(x)> f_2^\prime(x)$. Then there exists $h_0>0$
such that
%
\begin{align}
\label{f1>f2}&f_1(x+h)>f_2(x+h),\quad 0<h\le
h_0,
\\
&\label{f2>f1}f_2(x+h)>f_1(x+h), \quad
-h_0\le h<0.
\end{align}
From differentiability of the functions $f_1$ and $f_2$ it follows
that for $h\to0$,
%
\begin{equation}
\label{fi}f_i(x+h)=f_i(x)+f_i^\prime(x)h+o(h),
\quad i=1,2.
\end{equation} Then using \eqref{f1>f2} we obtain the equality
\begin{align*}
g(x+h)=f_1(x+h)=f_1(x)+f_1^\prime(x)h+o(h),
\quad h>0\xch{,} {.}
\end{align*}
and using \eqref{f2>f1},
\begin{align*}
g(x+h)=f_2(x+h)=f_2(x)+f_2^\prime(x)h+o(h),
\quad h<0.
\end{align*}
Thus, existence of $D^+ g(x)$ and $D^- g(x)$ follows.

Now let
%
\begin{equation}
\label{df1=df2}f_1^\prime(x)= f_2^\prime(x).
\end{equation} It follows from \eqref{f1=f2},
\eqref{fi} and \eqref{df1=df2}  that for $h\to0$,
\begin{align*}
g(x+h)=f_i(x)+f_i^\prime(x)h+o(h),\quad i=1,2.
\end{align*} Hence, $g^\prime(x)=f_1^\prime(x)=f_2^\prime(x)$. The lemma
is proved. \end{proof}

Denote
\begin{align*}
\varrho=\varrho(p),\quad q_i=1-p_i,\quad
\varrho_i=\varrho(p_i)\equiv\frac{\omega(p_i)}{\sqrt{p_iq_i}}.
\end{align*}

\begin{lemma}\label{lem-2.10} Let  $ 0< p_1<p<p_2\le0.5$.
Then for all $n\ge1$ and $\;0\le k\le n$,
%
\begin{equation}
\bigg|\frac{1}{\varrho}\,\Delta_{n,k}(p)- \frac{1}{\varrho_1}\,
\Delta_{n,k}(p_1) \bigg|\le L(p_1)
\,(p-p_1),\quad \;\label{(2.17)}
\end{equation} and
%
\begin{equation}
\bigg|\frac{1}{\varrho}\Delta_{n,k}(p)-\frac{1}{\varrho_2}
\Delta_{n,k}(p_2 ) \bigg|<L(p_1) (p_2-p).\label{(2.10.17)}
\end{equation}\end{lemma}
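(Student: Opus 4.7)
The plan is to introduce the function $\psi_{n,k}(p):=\Delta_{n,k}(p)/\varrho(p)$ and to show that on $(0,1/2]$ it is absolutely continuous with one-sided derivative bounded in modulus by $L(p)$. Both inequalities \eqref{(2.17)} and \eqref{(2.10.17)} will then follow by integration, together with the monotonicity of $L$ proved in Lemma~\ref{lem-L(p)ubyv}.

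First I control the regularity of $\Delta_{n,k}(p)$. By definition it is the maximum of the two quantities $|F_{n,p}(k+j)-G_{n,p}(k)|$ for $j=0,1$, hence the maximum of the four smooth (in $p$) branches $\varepsilon[F_{n,p}(k+j)-G_{n,p}(k)]$ with $\varepsilon\in\{+1,-1\}$. Applying Lemma~\ref{lem-f11} iteratively, $\Delta_{n,k}$ has one-sided derivatives at every point of $(0,1/2]$, each coinciding with the derivative of one of the four branches. Consequently $\psi_{n,k}$ is piecewise smooth, with only finitely many transition points on any compact subinterval of $(0,1/2]$, and in particular absolutely continuous there.

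Next I bound $\psi_{n,k}'$ at a point where one of the branches is active. By the product rule and Lemma~\ref{lem-2.8},
\begin{equation*}
\psi_{n,k}'(p)=\frac{1}{2}A(p)\,\Delta_{n,k}(p)+\frac{1}{\varrho(p)}\,D\Delta_{n,k}(p).
\end{equation*}
Lemmas~\ref{lem_loc-2} and~\ref{lem_loc-3} bound the derivative of the active branch in absolute value by $\max\{L_1(p),L_2(p)\}=L_2(p)$, using $L_1(p)\le L_2(p)$ for $p\le 1/2$. Combined with the uniform estimate $\Delta_{n,k}(p)\le 2c_3$ and the non-negativity of $A(p)$ on $(0,1/2]$, this gives
\begin{equation*}
|\psi_{n,k}'(p)|\le c_3 A(p)+\frac{L_2(p)}{\varrho(p)}=L(p),
\end{equation*}
where the last equality is exactly the representation~\eqref{L(p)-new2} of $L$ used in the proof of Lemma~\ref{lem-L(p)ubyv}.

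With $|\psi_{n,k}'|\le L$ almost everywhere and $L$ decreasing, integrating along $[p_1,p]$ yields
\begin{equation*}
|\psi_{n,k}(p)-\psi_{n,k}(p_1)|\le\int_{p_1}^p L(s)\,ds\le L(p_1)(p-p_1),
\end{equation*}
which is \eqref{(2.17)}. Bound \eqref{(2.10.17)} follows by the same argument on $[p,p_2]$, the strict inequality coming from the strict monotonicity of $L$ on that subinterval since $p>p_1$. The main obstacle I anticipate is justifying the uniform bound $\Delta_{n,k}(p)\le 2c_3=0.542$: this constant only marginally exceeds the natural ceiling $1/2$ that $\Delta_{n,k}(p)$ approaches as $p\downarrow 0$ for $n=1$, so a soft argument via any of the known Berry--Esseen bounds for Bernoulli summands (e.g.\ $C_{02}<0.4215$ from~\cite{NaCh}), supplemented by a direct check of the finitely many small $(n,k)$ where $\Delta_{n,k}$ can come close to $1/2$, should deliver it with room to spare.
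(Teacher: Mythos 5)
Your proof is correct and follows essentially the same route as the paper: the paper bounds $|\Delta_{n,k}(p)/\varrho(p)-\Delta_{n,k}(p_1)/\varrho(p_1)|$ by adding and subtracting $\Delta_{n,k}(p)/\varrho(p_1)$ and then estimating the two pieces with Lemmas~\ref{lem_loc-2}, \ref{lem_loc-3}, \ref{lem-2.8}, \ref{lem-2.9} and \ref{lem-L(p)ubyv}, which is just the difference-quotient form of your product-rule computation for $\psi_{n,k}=\Delta_{n,k}/\varrho$, and it likewise rests on the decomposition $L=L_2/\varrho+c_3A$ and the uniform bound $\Delta_{n,k}(p)<0.541$. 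The one caveat concerns that uniform bound, which the paper simply cites from \cite{arxive-2007}: your proposed derivation from $C_{02}<0.4215$ plus finitely many small cases would not close as stated, since $\varrho(p)\to\infty$ as $p\to0$ and the region of small $np$ where $0.4215\,\varrho(p)/\sqrt{n}$ exceeds $0.541$ is an infinite family of pairs $(n,p)$, so that regime needs its own (e.g.\ Poisson-type) argument or an external reference.
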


\begin{proof} Note that $\Delta_{n,k}(p)<0.541$ (see~\cite{arxive-2007}). Consequently,
%
\begin{equation}
\bigg|\frac{1}{\varrho}\,\Delta_{n,k}(p)- \frac{1}{\varrho_1}\,
\Delta_{n,k}(p_1) \bigg|\le \frac{1}{\varrho_1}\,|
\Delta_{n,k}(p)-\Delta_{n,k}(p_1)|+0.541 \biggl(
\frac{1}{\varrho}- \frac{1}{\varrho_1} \biggr)\;.\label{(2.18)}
\end{equation}

It is obvious that $F_{n,p}(k)$ and $G_{n,p}(k)$, considered as
functions of the argument
 $p$, are differentiable. Then, according to Lemma~\ref{lem-f11}, the one-side derivatives of the functions $\Delta_{n,k}(p)$ exist at each
 point
$p\in[0,0.5]$ and coincide with
 $\frac{\partial}{\partial p} (F_{n,p}(k+1)-G_{n,p}(k) )$ or $\frac{\partial}{\partial
 p} (G_{n,p}(k)-F_{n,p}(k) )$.

Taking into account that $L_1(p)\le L_2(p)$ for $0<p\le0.5$, we
obtain from Lemmas~\ref{lem_loc-2} and \ref{lem_loc-3}
%
\begin{multline}
|\Delta_{n,k}(p)-\Delta_{n,k}(p_1)|\le
(p-p_1)\max_{p_1\le s\le
p} |D^+\Delta_{n,k}(s) |
\\
\le(p-p_1)\max_{p_1\le s\le p} L_2(s).\label{(2.10.18)}
\end{multline} The function $L_2(s)$
decreases on
 $(0,\,0.5]$. Hence,
%
\begin{equation}
\max_{p_1\le s\le p} L_2(s)=L_2(p_1).\label{(2.10.19)}
\end{equation} The inequality
%
\begin{equation}
\frac{1}{\varrho_1}\, |\Delta_{n,k}(p)- \Delta_{n,k}(p_1)
|\le\frac{p-p_1}{\varrho_1}\,L_2(p_1)\label{(2.19)}
\end{equation}
follows from  \eqref{(2.10.18)} and \eqref{(2.10.19)}.
    Taking into account Lemmas~\ref{lem-2.8} and \ref{lem-2.9}, we have
%
\begin{equation}
\label{(2.20)}\frac{1}{\varrho}-\frac{1}{\varrho_1}\le (p-p_1)\,
\max\limits
_{p_1<s<p}\frac{d}{ds}\frac{1}{\varrho(s)}<2^{-1}A(p_1)
(p-p_1).
\end{equation} Collecting the estimates \eqref{(2.18)}, \eqref{(2.19)}, \eqref{(2.20)}, we obtain   with the help of
 \eqref{L(p)-new2} that for $0\le p_1<p\le0.5$,
%
\begin{multline}
\label{p1<p} \bigg|\frac{1}{\varrho}\,\Delta_{n,k}(p)-
\frac{1}{\varrho_1}\,\Delta_{n,k}(p_1) \bigg|\le
(p-p_1) \biggl(\frac{1}{\varrho_1}\,L_2(p_1)+0.271
\,A(p_1) \biggr)
\\
= (p-p_1)L(p_1).
\end{multline} Hence, for $0<p<p_2\le0.5$,
%
\begin{equation}
\label{p2-p} \bigg|\frac{1}{\varrho}\Delta_{n,k}(p)-\frac{1}{\varrho_2}
\Delta_{n,k}(p_2 ) \bigg|<(p_2-p)L(p).
\end{equation}

Inequality \eqref{(2.17)} coincides with \eqref{p1<p}, and
inequality  \eqref{(2.10.17)} follows from \eqref{p2-p} and
Lemma~\ref{lem-L(p)ubyv}.
 Lemma~\ref{lem-2.10} is proved.
\end{proof}

\begin{proof}[Proof of Theorem \ref{th-3}] It follows from the definition of
$p^\prime$ that  either $0<p-p^{\,\prime}<h/2$ or
$0<p^{\,\prime}-p<h/2$. In the first case the statement of  the
theorem follows from \eqref{(2.17)} and Lemma~\ref{lem-L(p)ubyv},
and in the second one from \eqref{(2.10.17)} and
Lemma~\ref{lem-L(p)ubyv} again.
\end{proof}

\section{Proof of Theorem \ref{th-2}}
\label{sect3}

\subsection{Theorem 1.1~\cite{NaCh} and some its consequences}
\label{sect2+1}

First we formulate Theorem 1.1  from  \cite{NaCh}. To do this, we
need to enter a rather lot of notations from \cite{NaCh}:
\begin{align*}
&\omega_3(p)=q-p,\quad \omega_4(p)=|q^3+p^3-3pq|,
\quad \omega_5(p)=q^4-p^4,
\\
&\omega_6(p)=q^5+p^5+15(pq)^2,
\\
&K_1(p,n)=\frac{\omega_3(p)}{4\sigma\sqrt{2\pi}(n-1)}\, \biggl(1+\frac{1}{4(n-1)}
\biggr)+ \frac{\omega_4(p)}{12\sigma^2\pi}\, \biggl(\frac{n}{n-1} \biggr)^2
\\
&\hspace*{30mm}+\,\frac{\omega_5(p)}{40\sigma^{3}\sqrt{2\pi}}\, \biggl(\frac{n}{n-1}
\biggr)^{5/2}+ \frac{\omega_6(p)}{90\sigma^{4}\pi}\, \biggl(\frac{n}{n-1}
\biggr)^{3};
\end{align*}
\vspace*{-12pt}
\begin{align*}
\omega(p)&=p^2+q^2,& \zeta(p)&= \biggl(
\frac{\omega(p)}{6} \biggr)^{2/3},\quad e(n,p)=\exp \biggl\{
\frac{1}{24\sigma^{2/3} \zeta^2(p)} \biggr\},
\\
e_5&=0.0277905,& \widetilde{\omega}_5(p)&=p^4+q^4+5!
\,e_5(pq)^{3/2},
\end{align*}
\vspace*{-18pt}
\begin{align*}
&V_6(p)=\omega_3^2(p),\quad
V_7(p)=\omega_3(p)\omega_4(p),\quad
V_8(p)=\frac{2\widetilde{\omega}_5(p)\omega_3(p)}{5!3!}+ \biggl(\frac{\omega_4(p)}{4!}
\biggr)^2,
\\
&V_9(p)=\widetilde{\omega}_5(p)\omega_4(p),
\quad V_{10}(p)=\widetilde{\omega}_5^2(p),\quad
A_k(n)= \biggl(\frac{n}{n-2} \biggr)^{k/2}\,
\frac{n-1}{n},
\end{align*}
\vspace*{-15pt}
\begin{align*}
\begin{array}{lllll}
\gamma_6=\frac{1}{9},&\quad\gamma_7=\frac{5\sqrt{2\pi}}{96},&\quad\gamma_8=24,&\quad
\gamma_9=\frac{7\sqrt{2\pi}}{4!\,16},&\quad\gamma_{10}=\frac{2^6\cdot
3}{(5!)^2},
\\[12pt]
\widetilde\gamma_6=\frac{2}{3},&\quad\widetilde\gamma_7=\frac{7}{8},&\quad\widetilde\gamma_8=\frac{10}{9},
&\quad\widetilde\gamma_9=\frac{11}{8},&\quad\widetilde\gamma_{10}=\frac{5}{3},
\end{array} %
\end{align*}
\vspace*{-15pt}
\begin{equation*}
K_2(p,n)=\frac{1}{\pi\sigma}\sum_{j=1}^5
\frac{\gamma_{j+5}\,A_{j+5}(n)\,V_{j+5}(p)}{\sigma^j}\, \biggl[1 +\frac{\widetilde\gamma_{j+5}\,e(n,p)\,n}{\sigma^{2}\,(n-2)} \biggr]; 
\end{equation*}
\vspace*{-15pt}
\begin{align*}
A_1=5.405,\quad A_2=7.521,\quad A_3=5.233,
\quad \mu=\frac{3\pi^2-16}{\pi^4}, 
\end{align*}
\vspace*{-18pt}
\begin{align*}
\chi(p,n)= \frac{2\zeta(p)}{\sigma^{2/3}}\;\,\text{\rm if}\;\, p\in(0,0.085),\;\;\text{\rm
and}\;\; \chi(p,n)=0\;\,\text{\rm if}\;\, p\in[0.085,0.5],
\end{align*}
\vspace*{-9pt}
\begin{equation*}
\begin{split} K_3(p,n)&=\frac{1}{\pi}\, \biggl\{
\frac{1}{12\sigma^2}+ \biggl(\frac{1}{36}+\frac{\mu}{8} \biggr)\,
\frac{1}{\sigma^4}+ \biggl(\frac{1}{36}\,e^{A_1/6}+
\frac{\mu}{8} \biggr)\, \frac{1}{\sigma^6}+ \frac{5\mu}{24}
\,e^{A_2/6}\, \frac{1}{\sigma^8}
\\
&+\,\frac{1}{3}\,\exp \biggl\{ -\sigma\sqrt{A_1}+
\frac{A_1}{6} \biggr\}+(\pi-2)\mu\exp \biggl\{ -\sigma\sqrt{
A_2}+\frac{ A_2}{6} \biggr\}
\\
&+\,\exp \biggl\{-\sigma\sqrt{A_3}+\frac{A_3}{6} \biggr\}
\frac{1}{4}\,\ln \biggl(\frac{\pi^4 \sigma^2}{4A_3} \biggr)
\\
&+\,\exp \biggl\{-\frac{\sigma^{2/3}}{2\zeta(p)} \biggr\} \biggl[\frac{2\zeta(p)}{\sigma^{2/3}}
+e^{A_3/6}\,\frac{1+\chi(p,n)}{24\,\zeta(p)\,\sigma^{4/3}} \biggr] \biggr\}; \end{split}
\end{equation*}
\vspace*{-6pt}
\begin{equation}
\label{R=K1+}R(p,n)=K_1(p,n)+K_2(p,n)+K_3(p,n).
\end{equation}

\begin{tthm}[{\cite[Theorem 1.1]{NaCh}}]\label{thmA}
Let
%
\begin{equation}
\label{p>0.02} \frac{4}{n}\le p\le0.5,\quad n\ge200.
\end{equation} Then
%
\begin{equation}
\label{Delta-Th} \Delta_n(p)\le \frac{\varrho(p)}{\sqrt{n}}\,{\cal
E}(p)+R(p,n),
\end{equation}  and
the sequence $R_0(p,n):=\frac{\sqrt{n}}{\varrho(p)}\,R(p,n)$ tends
to zero for every $0<p\le0.5$, decreasing in~$n$.
\end{tthm}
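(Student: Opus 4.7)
The natural strategy is a Fourier-analytic Edgeworth-type expansion, performed with enough care to extract the sharp Esseen-type leading constant $\mathcal{E}(p)$ while keeping an explicit, effectively computable remainder $R(p,n)$. By Lemma~\ref{lem-D+-}, $\Delta_n(p)=\max_{0\le i\le n}\Delta_{n,i}(p)$, so it suffices to bound $|F_{n,p}(i)-G_{n,p}(i)|$ and $|F_{n,p}(i+1)-G_{n,p}(i)|$ uniformly in $i$. For each fixed integer point I would use the inversion formulae
\begin{align*}
F_{n,p}(k)-G_{n,p}(k) \;=\; \frac{1}{2\pi}\int_{-\pi}^{\pi}\!\bigl[f^{n}(t)-e^{-\sigma^{2}t^{2}/2}\bigr]\,\frac{e^{-it(k-np)}}{1-e^{it}}\,dt \;+\; \text{normal tail},
\end{align*}
together with the tail estimate $\int_{|t|\ge\pi}e^{-\sigma^{2}t^{2}/2}\,dt \le (4/\pi\sigma^{2})e^{-\pi^{2}\sigma^{2}/8}$ already used in Lemma~\ref{lem_f^n-e^n-10}. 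The hypothesis $p\ge 4/n$ guarantees $\sigma^{2}=npq\ge 4q\ge 2$, which is what allows the Edgeworth expansion to converge with manageable constants.

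Next I would split the integration interval into three pieces and bound each by one of $K_{1},K_{2},K_{3}$. In the \emph{central} piece $|t|\le\sigma^{-2/3}\zeta(p)^{-1}$ I expand $\log f(t)$ in cumulants through order six, writing $f^{n}(t)e^{+\sigma^{2}t^{2}/2}$ as the product of a polynomial in $it$ (whose inverse Fourier transform, integrated against $(1-e^{it})^{-1}$, reproduces the Esseen term $\mathcal{E}(p)\varrho(p)/\sqrt{n}$) plus a remainder controlled by $|t|^{3}$ and $t^{4}$ estimates as in \eqref{t^3+t^4}. Collecting the weighted third through sixth cumulants $\omega_{3},\ldots,\omega_{6}$ yields exactly the definition of $K_{1}(p,n)$. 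In the \emph{intermediate} piece, up to a scale of order $\sigma^{1/3}$, I would use the sharper bound $|f(t)|\le e^{-2pq\sin^{2}(t/2)}$ from Lemma~\ref{lem-2-ZNC} and expand the difference through a fifth-order Edgeworth polynomial with the truncated fifth moment $\widetilde\omega_{5}$; the five resulting terms, weighted by $\gamma_{j+5}A_{j+5}(n)$ and enlarged by the factor $e(n,p)$ that absorbs the cross-terms from $(it)^{k}$ against $e^{-\sigma^{2}t^{2}/2}$, assemble into $K_{2}(p,n)$. In the \emph{outer} piece, near $\pm\pi$, I would bound $|f^{n}(t)|$ by exponentials of the form $e^{-c\sigma}$ and combine with the gaussian tail, yielding $K_{3}(p,n)$; the piecewise definition of $\chi(p,n)$ reflects the separate treatment needed once $p$ drops below $0.085$ (where $\zeta(p)^{-1}$ becomes large).

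To obtain the \emph{leading} constant $\mathcal{E}(p)$ one has to identify the Edgeworth polynomial coefficient that survives evaluation at the jump point of $F_{n,p}$. Writing the continuity correction $\tfrac12(1-e^{it})^{-1}$ and combining with the $(it)^{2}$-coefficient in the cumulant expansion produces precisely $\frac{2-p}{3\sqrt{2\pi}(p^{2}+q^{2})}\cdot\frac{\varrho(p)}{\sqrt{n}}$, matching Esseen's extremal two-point constant. This is where the structural rigidity of the bound resides: any change in the cumulant balancing would destroy the match, so the split of $R(p,n)$ into $K_{1}+K_{2}+K_{3}$ is essentially forced.

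The monotone decrease of $R_{0}(p,n)=\sqrt{n}R(p,n)/\varrho(p)$ and its vanishing as $n\to\infty$ follow from direct inspection: each summand in $K_{1},K_{2},K_{3}$ carries a negative power of $\sigma=\sqrt{npq}$, and the factors $A_{k}(n)=\bigl(n/(n-2)\bigr)^{k/2}(n-1)/n$ and $e(n,p)$ are themselves monotone in $n$ once $n\ge 200$. The principal obstacle, as in every effective Berry--Esseen estimate, is the bookkeeping: verifying that all universal constants ($\mu$, $A_{1},A_{2},A_{3}$, the $\gamma_{j}$ and $\widetilde\gamma_{j}$) arise correctly from the Fourier integrals and that the three ranges glue together without loss, so that the combined error can be compared against the sharp main term $\mathcal{E}(p)\varrho(p)/\sqrt{n}$. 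This is the tight numerical calibration that makes the subsequent computer-assisted extension possible.
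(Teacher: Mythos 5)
You should know at the outset that the paper does not prove Theorem~A: it is quoted verbatim, notation and all, from Theorem~1.1 of \cite{NaCh} and used here as a black box (only Corollaries A, A$'$, B and C are derived in this paper, by direct numerical work on the stated form of $R(p,n)$). So there is no internal proof to compare yours against; the relevant benchmark is the long argument in \cite{NaCh} itself.

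Measured against that, what you have written is a plausible road map rather than a proof, and it has concrete gaps. First, your inversion identity with the kernel $(1-e^{it})^{-1}$ is asserted, not established: the singularity at $t=0$ is only formally cancelled, the ``normal tail'' term is left undefined, and the convention for the value of $F_{n,p}$ at its jump points --- which is precisely what produces the lattice correction $\frac{1}{2\sqrt{2\pi}\,\sigma}$ inside ${\cal E}_1(p)=\frac{2-p}{3\sqrt{2\pi}}=\frac{q-p}{6\sqrt{2\pi}}+\frac{1}{2\sqrt{2\pi}}$ --- is never pinned down, even though Lemma~\ref{lem-D+-} shows the supremum is attained exactly at such a jump. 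Second, every quantitative step is deferred: none of the constants $\gamma_j$, $\widetilde\gamma_j$, $e_5$, $\mu$, $A_1,A_2,A_3$, nor the specific powers of $n/(n-1)$ and $n/(n-2)$, are derived, and the assertion that the three integration ranges ``assemble into'' $K_1$, $K_2$, $K_3$ is the content of the theorem restated, not proved. Third, the claim that $R_0(p,n)=\sqrt{n}\,R(p,n)/\varrho(p)$ decreases in $n$ does not follow ``from direct inspection'': after multiplication by $\sqrt{n}$ the individual factors pull in opposite directions (for instance $(n/(n-2))^{k/2}$ decreases while $(n-1)/n$ increases, and terms such as $\sqrt{n}\,e^{-\sigma\sqrt{A_i}}$ must be checked against the threshold $n\ge 200$), so monotonicity requires a term-by-term verification you have not supplied. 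The skeleton is the right one for a result of this type, but the theorem's substance lies entirely in the calibration you postpone.
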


Denote
\begin{align*}
E(p,n)={\cal E}(p)+R_0(p,n).
\end{align*} Figure \ref{ris3} shows the mutual location of the
following functions: $E(p, n)$ for $ n = 200 $ and $800 $, $ {\cal
E}(p) $ and $ T_n(p)  |_{n = 50}$. Note that, as a consequence of
the definition of the binomial distribution, the behavior of these
functions is symmetric with respect to $ p = 0.5 $.

\begin{figure}[ht]
\includegraphics{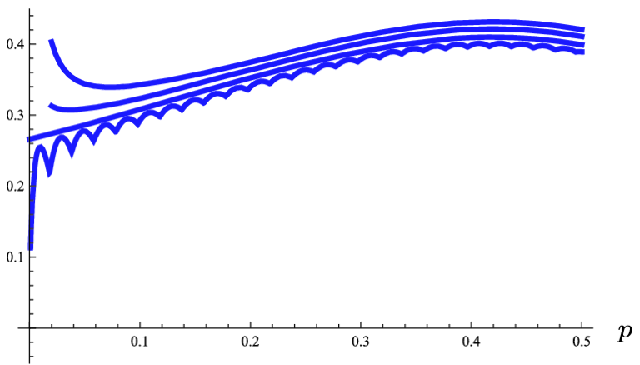}
\caption{%
Graphs of the functions (from top to down): \mbox{$\!E(p,\!200),\,
\!E(p,\!800),\,\!{\cal E}(p),\,\!T_{50}(p)$}}
\label{ris3}
\end{figure}

Recall that $N_0=500000$.

\begin{ccor}\label{corA}
For $p\in[0.1689,0.5]$, and $ n\ge
N_0$,
\begin{align*}
E(p,n)\le E(p,N_0)<0.409954.
\end{align*}
\end{ccor}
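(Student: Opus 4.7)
The plan is to split the claimed estimate into two parts. The monotonicity $E(p,n) \le E(p, N_0)$ for $n \ge N_0$ is immediate: since $E(p,n) = {\cal E}(p) + R_0(p,n)$ and ${\cal E}(p)$ does not depend on $n$, the inequality reduces to $R_0(p,n) \le R_0(p, N_0)$, which is part of the assertion of Theorem~A. Nothing further is needed for this half.

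For the numerical bound $E(p, N_0) < 0.409954$ on $p \in [0.1689, 0.5]$, Esseen's result gives ${\cal E}(p) \le C_E = 0.409732\ldots$, so it suffices to show
\[
R_0(p, N_0) < 0.409954 - {\cal E}(p),
\]
where the right-hand side is at least $\approx 2.2 \cdot 10^{-4}$. Since $R_0(p, N_0) = \sqrt{N_0}\,R(p, N_0)/\varrho(p)$ and every term of $R = K_1 + K_2 + K_3$ carries a factor $\sigma^{-k} = (N_0 pq)^{-k/2}$ with $k \ge 1$, the dominant contribution is of order $N_0^{-1/2}/\varrho(p)$, which at $N_0 = 5\cdot 10^5$ lies comfortably below $2 \cdot 10^{-4}$ uniformly on $[0.1689, 0.5]$.

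Concretely, I would mimic the grid-and-Lipschitz scheme of Corollary~\ref{th-1.2}: evaluate $E(p_j, N_0)$ at a sufficiently dense uniform grid on $[0.1689, 0.5]$ using the closed-form expressions for $\omega_j(p)$, $V_j(p)$, $A_k(n)$, $\zeta(p)$, $e(n,p)$ and $K_1, K_2, K_3$; verify the inequality at each node with a chosen safety margin; and then control the oscillation between nodes via an explicit Lipschitz bound on $\partial_p E(p, N_0)$, which is readily obtained because every ingredient is an elementary algebraic or exponential function of $p$, bounded together with its derivative on any interval that stays away from $p=0$. The main obstacle is bookkeeping rather than mathematics: $R(p,n)$ is a sum of roughly a dozen distinct summands hidden inside $K_2$ and $K_3$, and each must be differentiated and majorised carefully, but the smallness of $\sigma^{-1}$ at $n = N_0$ leaves ample slack in every inequality.
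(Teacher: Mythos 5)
Your monotonicity step is exactly the paper's: $E(p,n)={\cal E}(p)+R_0(p,n)$ and Theorem~A asserts $R_0(p,n)$ decreases in $n$, so $E(p,n)\le E(p,N_0)$. The paper then simply computes $\max_{p\in[0.1689,0.5]}E(p,N_0)$ numerically, which is in spirit your grid-plus-Lipschitz plan. The problem is your quantitative assessment of the second half.

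You claim the dominant contribution to $R_0(p,N_0)$ "lies comfortably below $2\cdot10^{-4}$" and that the smallness of $\sigma^{-1}$ "leaves ample slack in every inequality." This is false at the critical point. The maximum of ${\cal E}(p)$ on $[0.1689,0.5]$ is $C_E=0.4097321\ldots$, attained at $p^*=2-\sqrt{10}/2\approx0.4189$, so your target there is $0.409954-{\cal E}(p^*)\approx2.219\cdot10^{-4}$. But the leading term of $R_0$ is $D_2(p,n)/(\omega(p)\sigma)$, and at $p\approx0.4189$, $n=N_0$ one gets $D_2\approx0.0397$, $\omega(p)\approx0.513$, $\sigma\approx349$, hence $R_0\approx2.216\cdot10^{-4}$ — not comfortably below $2\cdot10^{-4}$ but essentially equal to the target, with a margin of order $10^{-7}$ (a relative slack of about one part in a thousand). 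This is why the paper's Corollary~A$'$, which majorizes the lower-order terms of $K_1,K_2,K_3$ carefully rather than crudely, still only reaches $0.409954153>0.409954$, and why the authors had to raise their originally announced constant by $10^{-7}$ after finding a computational inaccuracy. Any proof of the strict bound $E(p,N_0)<0.409954$ therefore requires evaluating $E(p,N_0)$ to roughly seven significant figures near $p\approx0.419$; the order-of-magnitude reasoning you give would lead an implementer to use loose majorants that fail. Your grid-and-Lipschitz scheme can be made to work, but only with essentially exact evaluation of every term of $R(p,N_0)$ and a grid/Lipschitz error budget well below $10^{-7}$, which is a far more delicate computation than your proposal acknowledges.
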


\begin{proof}
Since $E(p,n)$ decreases in $n$, we obtain the statement of
Corollary~\ref{corA} by finding the maximal value of $E(p,N_0)$ directly using a computer.
\end{proof}

In order to verify the plausibility of the previous numerical
result, we estimate the function $E(p,N_0)$, making preliminary
estimates of some of the terms that enter into it. This leads to the
following somewhat more coarse inequality.

\begin{CCOR}
For $p\in[0.1689,0.5]$, and $ n\ge
N_0$,
%
\begin{equation}
\label{Epn<}E(p,n)<0.409954153.
\end{equation}
\end{CCOR}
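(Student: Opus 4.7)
The plan is to reduce the bound on $E(p,n)$ for $n \ge N_0$ to an explicit bound on $R_0(p,N_0)$, and then estimate each of the ingredients $K_1, K_2, K_3$ of $R(p,N_0)$ by simple closed-form expressions that exploit the smallness of $1/\sigma$ on the domain $p\in[0.1689,0.5]$, $n=N_0=5\cdot10^5$.

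First I would invoke the monotonicity statement in Theorem~\ref{thmA}: since $R_0(p,n)$ decreases in $n$, so does $E(p,n)=\mathcal{E}(p)+R_0(p,n)$. Hence $\sup_{n\ge N_0}E(p,n)=E(p,N_0)$, and it suffices to prove \eqref{Epn<} at $n=N_0$. Next, recall that $\mathcal{E}(p)$ attains its global supremum $C_E=0.40973213\ldots$ on $(0,0.5]$, so it is enough to show that $R_0(p,N_0)<0.409954153-C_E$, that is, roughly $R_0(p,N_0)<2.2\cdot 10^{-4}$ uniformly on $[0.1689,0.5]$.

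To do this I would first record the useful uniform bounds on $[0.1689,0.5]$: one has $pq\ge 0.1689\cdot 0.8311>0.140$, hence $\sigma^2=N_0 pq\ge 7\cdot 10^4$ and $\sigma\ge 264$; moreover $\varrho(p)=(p^2+q^2)/\sqrt{pq}\ge 1$ (with minimum at $p=0.5$), so $\sqrt{N_0}/\varrho(p)\le\sqrt{N_0}$. I would also bound $\omega_3,\omega_4,\omega_5,\omega_6,\widetilde\omega_5$ trivially by $1$ on this interval, and estimate $e(N_0,p)=\exp\{1/(24\sigma^{2/3}\zeta^2(p))\}$ by a constant very close to $1$ because $\sigma^{2/3}$ is already large. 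With these ingredients, each summand in $K_1(p,N_0)$ is a product of a small combinatorial factor and a negative power of $\sigma$; multiplying by $\sqrt{N_0}/\varrho(p)\le\sqrt{N_0}$ still leaves an expression of order $\sigma^{-j}\sqrt{N_0}=(pq)^{-j/2}N_0^{(1-j)/2}$ with $j\ge 1$, and direct numerical substitution shows each contributes a quantity of order $10^{-5}$ or smaller. The same strategy handles $K_2(p,N_0)$, whose ten summands all carry at least a factor $1/\sigma^2$. For $K_3(p,N_0)$, the polynomial-in-$1/\sigma^2$ part is tiny because $\sigma^2\ge 7\cdot 10^4$, and the exponentials $\exp\{-\sigma\sqrt{A_i}+A_i/6\}$ and $\exp\{-\sigma^{2/3}/(2\zeta(p))\}$ are astronomically small since $\sigma\ge 264$.

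I would then sum these explicit upper bounds for $K_1,K_2,K_3$, multiply by $\sqrt{N_0}/\varrho(p)\le\sqrt{N_0}$, and verify that the result is bounded above by $0.409954153-C_E$. Combined with $\mathcal{E}(p)\le C_E$, this yields the claimed inequality \eqref{Epn<}. The main obstacle will be the bookkeeping in $K_3$: there are many summands of different analytic types (polynomial, exponential, logarithmic), and one must also verify the mild nonuniformity coming from the factor $\chi(p,n)$ (which vanishes on most of $[0.1689,0.5]$ since $0.1689>0.085$, so in fact $\chi(p,N_0)\equiv 0$ on the relevant range, simplifying things considerably). Apart from that, every estimate is essentially a one-line computation using $\sigma\ge 264$ and the elementary bounds on $\omega_k(p)$, $\varrho(p)$, $\zeta(p)$ on the interval $[0.1689,0.5]$.
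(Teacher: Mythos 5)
Your reduction contains a genuine gap: the step ``it is enough to show $R_0(p,N_0)<0.409954153-C_E\approx 2.2\cdot 10^{-4}$ uniformly on $[0.1689,0.5]$'' is a reduction to a false statement. One has $R_0(p,n)=\sigma R(p,n)/\omega(p)$ with $R(p,n)\approx \overline D_2(p,n)/\sigma^2$, and by the paper's Corollary~C the best uniform constant is $\overline D_2\le 0.05532$ on this interval; at the left endpoint $p=0.1689$ this gives $R_0(p,N_0)\approx 0.0553/(0.719\cdot 264.9)\approx 2.9\cdot 10^{-4}$, which already exceeds your budget of $2.2\cdot 10^{-4}$. The paper itself records this failure of the decoupled bound: the Remark following Corollary~C shows that the estimate \eqref{532}, i.e.\ $\Delta_n(p)\le {\cal E}_1(p)/\sigma+0.05532/\sigma^2$, yields $0.409954$ only for $n\ge 971000$ (see \eqref{954-4} versus \eqref{954-3}), nearly twice $N_0$. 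The difficulty is that $\sup_p{\cal E}(p)=C_E$ is attained at $p\approx 0.4189$ while $\sup_p R_0(p,N_0)$ is attained near $p=0.1689$, so $\sup_p{\cal E}+\sup_p R_0\approx 0.41002$ overshoots the target; the margin in \eqref{Epn<} is too thin to survive this decoupling.

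What the paper does instead is keep the $p$-dependence of the leading term of $R_0$. It extracts from each $K_i(p,n)\sigma/\omega(p)$ its principal part of order $1/\sigma$ (with coefficient $D_2(p,n)/\omega(p)$, not its maximum over $p$), groups these with ${\cal E}(p)$ into a single function $B(p,n)$, and maximizes $B(p,N_0)$ jointly in $p$ by computer, obtaining $0.40995378\ldots$ at $p\approx 0.4189$ --- at that point $D_2/( \omega\sigma)\approx 2.2\cdot 10^{-4}$, just within budget. Only the genuinely higher-order remainders $r_1+r_2+r_3<3.7\cdot 10^{-7}$ are bounded uniformly and added on top. Your preliminary steps (monotonicity in $n$ reducing to $n=N_0$, the bounds $\sigma\ge 264$ and $\varrho(p)\ge 1$, and the observation that $\chi(p,N_0)\equiv 0$ on $[0.1689,0.5]$) are all correct and consistent with the paper, and your machinery suffices to control the $r_j$; but without the joint maximization of ${\cal E}(p)$ plus the leading term, the final numerical verification you propose would not close.
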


\begin{proof}
Separate the proof of  \eqref{Epn<} into four steps. First we rewrite
 $R_0(p,n)$ in the following form,
\begin{align*}
R_0(p,n)=\frac{K_1(p,n)\sigma}{\omega(p)}+\frac{K_2(p,n)\sigma}{\omega(p)}+
\frac{K_3(p,n)\sigma}{\omega(p)}.
\end{align*}
In each function $\frac{K_i(p,n)\sigma}{\omega(p)}$, $i=1,2,3$, we
will select the principal term, and estimate the remaining ones.

Step 1. Note that for $n\ge N_0$ and $0<a\le3$,
\begin{align*}
&\biggl(\frac{n}{n-1} \biggr)^a\le \biggl(\frac{n}{n-1}
\biggr)^3<e_1:=1.00000601,
\\
& 1+\frac{1}{4(n-1)}
\!<e_2:=1.000000501.
\end{align*} Then
\begin{align*}
\frac{K_1(p,n)\,\sigma}{\omega(p)}=\frac{\omega_4(p)}{12\pi\omega(p)\sigma} \biggl(\frac{n}{n-1}
\biggr)^2 +r_1(p,n),
\end{align*}
where
\begin{align*}
r_1(p,n)<\widetilde r_1(p,n):=\frac{e_1}{\omega(p)} \biggl(
\frac{e_2\,\omega_3(p)}{4\sqrt{2\pi}(n-1)} +\frac{\omega_5(p)}{40\sqrt{2\pi}\sigma^2}+\frac{\omega_6(p)}{90\pi
\sigma^3} \biggr).
\end{align*}
 Using a computer, we get the estimate $\widetilde r_1(p,n)\le \widetilde
r_1(0.1689,N_0)<2.78\cdot10^{-7}$.

Step 2. We have
\begin{align*}
\frac{K_2(p,n)\sigma}{\omega(p)}= \frac{\gamma_{6}\,A_{6}(n)\,V_{6}(p)}{\pi\omega(p)\sigma}+r_2(p,n),
\end{align*}
where
\begin{align*}
r_2(p,n)= \sum_{j=2}^5
\frac{\gamma_{j+5}A_{j+5}(n)V_{j+5}(p)}{\pi\omega(p)\sigma^j}
\biggl[1 +\frac{\widetilde\gamma_{j+5}e(n,p)n}{\sigma^{2}(n-2)} \biggr]
+\frac{\gamma_{6}\widetilde\gamma_{6}A_6(n)e(n,p)n}{\pi\omega(p)\sigma^{3}(n-2)}.
\end{align*}
Taking into account that for  $n\ge N_0$, $1\le j\le5$ and
$p\in[0.1689,0.5]$, we have
\begin{align*}
& A_{j+5}(n)<A_{10}(N_0)<e_3:=1.00001801,
\quad e(n,p)\le e(N_0,0.5)<1.02316,
\\
& 1+\frac{\widetilde\gamma_{j+5}\,e(n,p)\,n}{\sigma^{2}\,(n-2)}<1+\frac{(5/3)\cdot1.02316}{pq(N_0-2)}
\bigg|_{p=0.1689}<e_4:=1.0000243.
\end{align*}
Then, taking into account as well that $A_6(N_0)<1.0000101$, we get
\begin{align*}
r_2(p,n)<\widetilde r_2(p,n):=\frac{e_3\cdot
e_4}{\pi\omega(p)}\sum
_{j=2}^5\frac{\gamma_{j+5}V_{j+5}(p)}{\sigma^j}+
\frac{(1/9)(2/3)1.0000101\cdot1.02316}{\pi\omega(p)(pq)^{3/2}\sqrt{n}(n-2)}.
\end{align*}
We find with the help of a computer: $\widetilde r_2(p,n)\le
\widetilde r_2(0.1689,N_0)<8.852\cdot10^{-8}$.

Step 3. Let us write up
\begin{align*}
\frac{K_3(p,n)\sigma}{\omega(p)}=\frac{1}{12\pi\omega(p)\sigma}+r_3(p,n),
\end{align*}
where
\begin{align*}
r_3(p,n)&=\frac{\sigma}{\pi \omega(p)}\, \biggl\{ \biggl(
\frac{1}{36}+\frac{\mu}{8} \biggr)\, \frac{1}{\sigma^4}+ \biggl(
\frac{1}{36}\,e^{A_1/6}+ \frac{\mu}{8} \biggr)\,
\frac{1}{\sigma^6}+ \frac{5\mu}{24}\,e^{A_2/6}\, \frac{1}{\sigma^8}
\nonumber
\\
&\quad+\,\frac{1}{3}\,\exp \biggl\{ -\sigma\sqrt{A_1}+
\frac{A_1}{6} \biggr\}+(\pi-2)\mu\exp \biggl\{ -\sigma\sqrt{
A_2}+\frac{ A_2}{6} \biggr\}
\nonumber
\\
&\quad+\,\exp \biggl\{-\sigma\sqrt{A_3}+\frac{A_3}{6} \biggr
\} \frac{1}{4}\,\ln \biggl(\frac{\pi^4 \sigma^2}{4A_3} \biggr)
\nonumber
\\
&\quad+\,\exp \biggl\{-\frac{\sigma^{2/3}}{2\zeta(p)} \biggr\} \biggl[\frac{2\zeta(p)}{\sigma^{2/3}}
+e^{A_3/6}\,\frac{1+\chi(p,n)}{24\,\zeta(p)\,\sigma^{4/3}} \biggr] \biggr\}. 
\end{align*}
Using a computer, we get $r_3(p,n)\le
r_3(0.1689,N_0)<1.08\cdot10^{-9}$.

Thus, for $p\in[0.1689,0.5]$, $n\ge N_0$, we have
\begin{multline*}
r_1(p,n)+r_2(p,n)+r_3(p,n)<2.78
\cdot10^{-7}+8.852\cdot10^{-8}+1.08\cdot10^{-9}<3.676
\cdot10^{-7}.
\end{multline*}

Step 4. Now consider the function
\begin{align*}
B(p,n)= {\cal E}(p)+\frac{1}{12\pi\omega(p)\sigma} \biggl(\omega_4(p) \biggl(
\frac{n}{n-1} \biggr)^2 +12\gamma_{6}
\,A_{6}(n)\,V_{6}(p)+1 \biggr).
\end{align*}
We find with the help of a computer that for $p\in[0.1689,0.5]$,
$n\ge N_0$,
\begin{multline*}
\max_{p\in[0.1689,0.5]}B(p,n)=\max_{p\in[0.1689,0.5]}B(p,N_0)
\\
=B(0.418886928\ldots\;,N_0)=0.40995378459\ldots\;.
\end{multline*}
Consequently,
\begin{multline*}
E(p,n)=B(p,n)+\sum_{j=1}^3
r_j(p,n)
\\
 <0.4099537846+3.676\cdot10^{-7}
<0.409954153.
\qedhere
\end{multline*}
\end{proof}

Let us introduce the following notations:
\begin{align*}
{\cal E}_1(p)=\bigl(p^2+q^2\bigr){\cal E}(p)=
\frac{2-p}{3\sqrt{2\pi}},
\end{align*}
$D_2(p,n)$
is the coefficient at $\frac{1}{\sigma^2}$ in the expansion of
$R(p,n)$ in powers of $\frac{1}{\sigma}$,\break $\overline
D_2(p,n)=\sigma^2R(p,n)$, where the remainder $R(p,n)$ is defined by
equality~\eqref{R=K1+}. One can rewrite  bound \eqref{Delta-Th} in
the following form,
%
\begin{equation}
\label{Delta-Th-2} \Delta_n(p)\le\frac{{\cal E}_1(p)}{\sigma}+
\frac{\overline
D_2(p,n)}{\sigma^2}.
\end{equation} Define
$D_2^I(n)=\max\limits_{p\in I}D_2(p,n)$, $\overline
D_2^I(n)=\max\limits_{p\in I}\overline D_2(p,n)$, where $I$ is an
interval.

\begin{ccor}\label{corB}
The quantities $\max\limits_{n\ge N}D_2^I(n)$ and $\max\limits_{n\ge
N}\overline D_2^I(n)$ take the following values depending on
$N=200,\,N_0$ and intervals $I=[0.02,0.5]$, $[0.1689,0.5]$:

\begin{table}[h]
\caption{Some values of $\max\limits_{n\ge N}D_2^I(n)$ and
$\max\limits_{n\ge N}\overline D_2^I(n)$}
\label{tab2-D2}
\begin{tabular}{|c|c|c|c|}
 \hline
 &$\phantom{\int}$\hspace{-2mm}$I=[0.02,0.5]$&\multicolumn{2}{c|}{$I=[0.1689,0.5]$}\\[0.3mm]
 \hline&$N=200$&$N=200$&$N=N_0$\\[0.3mm]
\hline $\max\limits_{n\ge
N}D_2^I(n)=$&$0.083592\ldots$&$0.046656\ldots$&$0.0462198\ldots$\\[0.5mm]
\hline
$\max\limits_{n\ge N}\overline
D_2^I(n)=$&$0.1940\ldots$&$0.05986\ldots$&$0.05531\ldots$\\[0.5mm]
\hline
\end{tabular}
\end{table}
\end{ccor}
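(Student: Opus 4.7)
The plan is to combine an explicit extraction of $D_2(p,n)$ and $\overline D_2(p,n)$ from the defining formula of $R(p,n)$ with a monotonicity-in-$n$ argument, thereby reducing each entry of Table~\ref{tab2-D2} to a one-dimensional numerical maximization over $p\in I$. Collecting the coefficient of $\sigma^{-2}$ from $K_1$, $K_2$ and $K_3$ in~\eqref{R=K1+}, one obtains the closed form
\[
D_2(p,n) = \frac{\omega_4(p)}{12\pi}\left(\frac{n}{n-1}\right)^{\!2} + \frac{\gamma_6\,A_6(n)\,V_6(p)}{\pi} + \frac{1}{12\pi},
\]
and $\overline D_2(p,n) = \sigma^2 R(p,n)$ is likewise a finite sum of explicit functions of $(p,n)$, obtained by multiplying each summand of $K_1$, $K_2$, $K_3$ through by $\sigma^2 = npq$.

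The key step is to show that for every fixed $p\in I$, both $D_2(p,n)$ and $\overline D_2(p,n)$ are non-increasing in $n$. For $D_2(p,n)$ this is immediate, since $(n/(n-1))^2$ and $A_6(n) = (n/(n-2))^{3}(n-1)/n$ are each decreasing in $n$ (for the latter, one checks $\tfrac{d}{dn}\log A_6(n) = -(5n-4)/[n(n-1)(n-2)] < 0$). For $\overline D_2(p,n)$ one verifies the claim term by term. The four summands of $\sigma^2 K_1$ are each a $p$-constant multiple of $\sigma/(n-1)\cdot(1+O(1/(n-1)))$, $(n/(n-1))^2$, $(n/(n-1))^{5/2}/\sigma$, or $(n/(n-1))^3/\sigma^2$; all are non-increasing in $n$ (for the first, $\sqrt{n}/(n-1)$ is easily seen to be decreasing). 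The five summands of $\sigma^2 K_2$ are proportional to $A_{j+5}(n)\sigma^{1-j}[1 + \widetilde\gamma_{j+5}e(n,p)n/(\sigma^2(n-2))]$; here $A_{j+5}(n)$ decreases, $\sigma^{1-j}$ for $j\ge 1$ is non-increasing, and the bracket decreases since $e(n,p)$ and $n/(n-2)$ decrease while $\sigma^2$ grows. Finally, $\sigma^2 K_3$ consists of the constant $1/(12\pi)$, negative powers $\sigma^{-2},\sigma^{-4},\sigma^{-6}$, and exponential terms of the form $\sigma^2 e^{-c\sigma}$ with $c\in\{\sqrt{A_1},\sqrt{A_2},\sqrt{A_3}\}$; since $\sigma^2 e^{-c\sigma}$ is decreasing in $\sigma$ once $\sigma\ge 2/c$, and on the range $n\ge 200$, $p\ge 0.02$ one has $\sigma^2 = npq \ge 3.92$ while $2/\sqrt{A_k} < 1$ for $k=1,2,3$, the exponential contributions are decreasing as well. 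Summing,
\[
\max_{n\ge N} D_2^I(n) = \max_{p\in I} D_2(p,N), \qquad \max_{n\ge N} \overline D_2^I(n) = \max_{p\in I} \overline D_2(p,N).
\]

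It then remains to evaluate each of the six one-dimensional maxima for the chosen $N\in\{200,N_0\}$ and $I\in\{[0.02,0.5],[0.1689,0.5]\}$. This is a routine computer calculation: one evaluates the explicit expressions for $D_2(p,N)$ and $\overline D_2(p,N)$ on a sufficiently fine uniform grid in $p$ and controls the discretization error by a Lipschitz bound on $\partial_p D_2$ and $\partial_p \overline D_2$, entirely in the spirit of Theorem~\ref{th-3} and Corollary~\ref{th-1.2}. The principal obstacle is the bookkeeping in the term-by-term monotonicity check for $\overline D_2$: the factor $\sigma^2$ temporarily destroys the decay in $n$, and one must confirm in each exponential or negative-power-of-$\sigma$ summand that the $\sigma^2$ prefactor is dominated by the decreasing tail on the relevant range; once that is in place, the six table entries follow directly from the numerical maximization.
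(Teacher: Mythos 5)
Your proposal is correct and follows essentially the same route as the paper: both extract the coefficient of $\sigma^{-2}$ from $K_1+K_2+K_3$ to get the same closed form for $D_2(p,n)$, reduce $\max_{n\ge N}$ to $n=N$ by monotonicity in $n$ (a step the paper asserts without the term-by-term verification you supply), and finish with a computer maximization over $p\in I$ (where the paper additionally observes that the limit function $G_2(p)$ decreases then increases, so its maximum sits at an endpoint of $I$). The only imprecision is that the last summand of $\sigma^2 K_3$ decays like $e^{-\sigma^{2/3}/(2\zeta(p))}$ rather than $e^{-c\sigma}$, but the same ``$u^2e^{-bu}$ decreases for $u>2/b$'' argument applies with $u=\sigma^{2/3}$, so nothing breaks.
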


\begin{proof}
Since
\begin{align*}
\max_{n\ge N}\overline D_2^I(n)=\max
_{n\ge N}\max_{p\in
I}\sigma^2R(p,n)=
\max_{p\in I}\sigma^2R(p,N),
\end{align*} then by using a
computer, we get the tabulated values of $\max\limits_{n\ge
N}\overline D_2^I(n)$.

Proceed to the derivation of the values of $\max\limits_{n\ge N}
D_2^I(n)$. It follows from the definitions of $K_1(p,n)$,
$K_2(p,n)$, and $K_3(p,n)$ that the coefficient at
$\frac{1}{\sigma^2}$ in $R(p,n)$ is
\begin{align*}
D_2(p,n)=\frac{\omega_4(p)}{12\pi}\, \biggl(\frac{n}{n-1}
\biggr)^2+\frac{1}{\pi}\,\gamma_6
A_6(n)V_6(p)+\frac{1}{12\pi}
\end{align*} or, in more detail,
\begin{multline*}
D_2(p,n)=\frac{1}{36\pi} \biggl(3|q^3+p^3-3pq|
\biggl(\frac{n}{n-1} \biggr)^2 +4A_6(n)
(q-p)^2+3 \biggr)
\\
=:\frac{G_2(p,n)}{36\pi}.
\end{multline*}
First
 we consider ${G_2}(p):=\lim\limits_{n\to\infty}G_2(p,n)$. We have
\begin{align*}
{G_2}(p)=3|q^3+p^3-3pq|+4(q-p)^2+3
\equiv3|6p^2-6p+1|+4(1-2p)^2+3.
\end{align*}
 Taking into account that
\begin{align*}
|6p^2-6p+1|=\begin{cases}6p^2-6p+1&\text{if}\;p\le p_1:=\frac{3-\sqrt{3}}{6}=0.211324\ldots\;,\\
-6p^2+6p-1&\text{if}\;p>p_1,\end{cases}
\end{align*} we obtain
\begin{align*}
{G_2}(p)=\begin{cases}2(17p^2-17p+5)&\text{if}\;p\le p_1,
 \\
 -2(p^2-p-2)&\text{if}\;p> p_1.\end{cases} %
\end{align*}
Since $ G_2 (p) $ decreases for $ p <p_1 $, and increases for $ p>
p_1 $, then the maximum value of this function is achieved either at
the left bound or at the right bound of the interval. We have
\begin{align*}
G_2(0.02)=9.3336,\quad G_2(0.1689)=5.2273251\ldots\;,
\quad G_2(0.5)=4.5.
\end{align*} Thus,
\begin{align*}
&\frac{1}{36\pi}\,\max_{0.02\le
p\le0.5}G_2(p)=
\frac{G_2(0.02)}{36\pi}=0.0825271\ldots\;,
\\
&\frac{1}{36\pi}\,\max_{0.1689\le
p\le0.5}G_2(p)=
\frac{G_2(0.1689)}{36\pi}=0.04621970\ldots\;.
\end{align*}

Similarly, 
with more efforts only, we get
\begin{gather*}
\max_{0.02\le p\le0.5}G_2(p,200)= G_2(0.02,200)=9.4541
\ldots\;,
\\
\max_{0.1689\le p\le0.5}G_2(p,200)= G_2(0.1689,200)=5.2767
\ldots\;,
\\
G_2(0.5,200)=4.515\ldots\;,
\\
\max_{0.02\le p\le0.5}G_2(p,N_0)=
G_2(0.02,N_0)=9.33364\ldots\;,
\\
\max_{0.1689\le p\le0.5}G_2(p,N_0)=
G_2(0.1689,N_0)=5.227344\ldots\;,
\\
G_2(0.5,N_0)=4.00006\ldots\;.
\end{gather*}

  Consequently,
\begin{gather*}
\frac{\max\limits_{0.02\le p\le0.5}G_2(p,200)}{36\pi}= 0.083592\ldots\;,\quad \frac{\max\limits_{0.1689\le
p\le0.5}G_2(p,200)}{36\pi}=0.046656\ldots
\;,
\\
\frac{\max\limits_{0.1689\le
p\le0.5}G_2(p,N_0)}{36\pi}=0.0462198\ldots\;.
\qedhere
\end{gather*}
\end{proof}

\begin{remark} 1.  One can observe from the previous proof that
$G_2(p,N_0)\approx G_2(p)$, therefore,
$D_2(p,N_0)\approx\frac{G_2(p)}{36\pi}$.

2. With increasing $N$, the sequence $a^I(N):=\max\limits_{n\ge N}
 D_2^I(n)$
approaches to $a^I:=\frac{1}{36\pi}\,\max\limits_{p\in I}G_2(p)$.
For instance, by Table~\ref{tab2-D2}, we have for the interval
$I=[0.1689,0.5]$ that $a^I(200)=0.046656\ldots\;$,
$a^I(N_0)=0.0462198\ldots\;$ while $a^I=0.0462197\ldots \;$.  The
sequence $\overline a^I(N):=\max\limits_{n\ge N} \overline D_2^I(n)$
tends to $0.0462197\ldots \;$ as well, but slowly,  since the main
term of the difference $\overline D_2(p,n)-\frac{G_2(p)}{36\pi}$ has
the order $\frac{1}{\sqrt{n}}$.
\end{remark}

The following bound for $\Delta_n(p)$, simpler than Theorem~\ref{thmA},
follows from \eqref{Delta-Th-2} and Table~\ref{tab2-D2}.

\begin{ccor}\label{corC}
 For all $p\in I=[0.1689,0.5]$
and $n\ge N_0$,
%
\begin{equation}
\label{532}\Delta_n(p)\le\frac{{\cal
E}_1(p)}{\sigma}+\frac{0.05532}{\sigma^2}.
\end{equation}
\end{ccor}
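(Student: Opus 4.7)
The plan is to derive Corollary C as an immediate consequence of inequality \eqref{Delta-Th-2} (which recasts Theorem~\ref{thmA}) together with the numerical bound on $\overline D_2^I(n)$ tabulated in Corollary~\ref{corB}. There is essentially no new analytical work to do: one only needs to observe that all hypotheses of the preceding results are met on the region $p\in[0.1689,0.5]$, $n\ge N_0$, and then substitute the already-computed uniform bound.

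More concretely, I would first verify applicability. For $p\in I$ we have $p\ge 0.1689\ge 4/N_0$, and clearly $n\ge N_0\ge 200$, so the hypothesis \eqref{p>0.02} of Theorem~\ref{thmA} is satisfied. Consequently \eqref{Delta-Th-2} holds, i.e.
\begin{equation*}
\Delta_n(p)\le\frac{{\cal E}_1(p)}{\sigma}+\frac{\overline D_2(p,n)}{\sigma^2}.
\end{equation*}

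Next, I would invoke Corollary~\ref{corB}. The rightmost column of Table~\ref{tab2-D2} gives
\begin{equation*}
\max_{n\ge N_0}\overline D_2^I(n)=0.05531\ldots\,,
\end{equation*}
which is strict less than $0.05532$. Since $\overline D_2(p,n)\le\max_{p\in I}\overline D_2(p,n)=\overline D_2^I(n)$ for every $p\in I$, and since $\max_{n\ge N_0}\overline D_2^I(n)$ dominates $\overline D_2^I(n)$ for all $n\ge N_0$, we obtain $\overline D_2(p,n)<0.05532$ uniformly on $I\times\{n\ge N_0\}$. Inserting this into the previous display gives \eqref{532}.

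The only potential obstacle is the qualitative claim implicit in the table entry, namely that $\sup_{n\ge N_0}\overline D_2^I(n)$ is actually attained (or at least well-approximated) at $n=N_0$; but this is precisely the content of the monotonicity assertion in Theorem~\ref{thmA} that $R_0(p,n)$ decreases in $n$, together with the fact that $\overline D_2(p,n)=\sigma^2 R(p,n)$ involves the extra factor $\sigma^2=npq$ whose slight growth in $n$ is dominated by the decay of $R(p,n)$ for $n\ge N_0$. This is already handled in the proof of Corollary~\ref{corB}, so nothing further is required and Corollary~\ref{corC} follows.
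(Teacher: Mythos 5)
Your proposal is correct and is exactly the argument the paper intends: the paper derives Corollary~C in one line from \eqref{Delta-Th-2} together with the entry $\max_{n\ge N_0}\overline D_2^I(n)=0.05531\ldots<0.05532$ in Table~\ref{tab2-D2}, which is what you do (with the hypothesis check of \eqref{p>0.02} and the remark about attainment at $n=N_0$ being sensible additions already covered by Corollary~B).
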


\begin{remark}
Corollary~C allows to obtain the same estimate for $ C_{02} $
as~\eqref{953}, but for larger $ n $. Really, it is easy to verify
with the help of a computer that
%
\begin{equation}
\label{954-4} \sup_{p\in[0.1689,0.5]} \biggl({\cal E}(p)+
\frac{0.05532}{\sqrt{npq}(p^2+q^2)} \bigg|_{n=971000} \biggr) <0.409954,
\end{equation} but
%
\begin{equation}
\label{954-3} \sup_{p\in[0.1689,0.5]} \biggl({\cal E}(p)+
\frac{0.05532}{\sqrt{npq}(p^2+q^2)} \bigg|_{n=970000} \biggr)>0.409954.
\end{equation}
\end{remark}

\subsection{On the connection between Uspensky's result and its refinements with the problem of estimating $C_{02}$}

First we recall  Uspensky's estimate, published by him in 1937 in
\cite{Uspen}. To this end we introduce the following notations:
$S_n$ is the number of occurrences of an event in a series of $n$
Bernoulli trials with a  probability of success  $p$, $\mu=np$,
\begin{align*}
G(x)=\varPhi(x)+\frac{q-p}{6\sqrt{2\pi}\,\sigma}\bigl(1-x^2\bigr)e^{-x^2/2}.
\end{align*}
 For
every $x\in{\mathbb R}$, define
%
\begin{equation}
\label{x^+}x_n^\pm=\frac{x-\mu\pm\frac{1}{2}}{\sigma},
\end{equation}
where $\sigma=\sqrt{npq}$, as before.

 Uspensky's result can be formulated in the following form.

\begin{tthm}[{\cite[p. 129]{Uspen}}]
\label{thmB}
Let $\sigma^2\ge25$. Then
for arbitrary integers $a<b$,
%
\begin{equation}
\label{resU-1} \big|{\bf P}(a\le S_n\le b)- \bigl(G
\bigl(b_n^- \bigr)-G \bigl(a_n^+ \bigr) \bigr) \big|\le
\frac{0.13+0.18|p-q|}{\sigma^2}+e^{-3\sigma/2}.
\end{equation}
\end{tthm}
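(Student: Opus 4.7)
The natural route is the characteristic-function / Edgeworth expansion scheme already at work in Lemma~3. Denote $\hat\psi_n(t) = \mathbf{E}e^{itS_n} = (q+pe^{it})^n$. Fourier inversion for integer-valued variables combined with the geometric sum $\sum_{k=a}^b e^{-itk}$ gives
\begin{align*}
\mathbf{P}(a\le S_n\le b) = \frac{1}{2\pi}\int_{-\pi}^{\pi}\hat\psi_n(t)\,\frac{e^{-it(a-1/2)} - e^{-it(b+1/2)}}{2i\sin(t/2)}\,dt.
\end{align*}
On the other hand, $G(b_n^-)-G(a_n^+)$ is the integral over an interval with continuity-corrected endpoints of the Edgeworth density $g(x) = \frac{1}{\sigma}\varphi(z) + \frac{q-p}{6\sqrt{2\pi}\,\sigma^2}(z^3-3z)e^{-z^2/2}$ with $z=(x-\mu)/\sigma$; a Hermite-polynomial computation then gives the Fourier transform
\begin{align*}
\hat g(t) = e^{it\mu - \sigma^2 t^2/2}\Bigl(1 + \tfrac{\sigma^2(q-p)(it)^3}{6}\Bigr),
\end{align*}
which by construction matches the first three cumulants of $S_n$ (recall $\kappa_3(S_n) = \sigma^2(q-p)$).

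Subtracting the two representations, I would split the integration range into $|t|\le T$ and $T\le|t|\le\pi$ with $T=\sqrt{3/\sigma}$, so that $\sigma^2 T^2/2 = 3\sigma/2$. On $|t|\le T$, Taylor-expand $\log\hat\psi_n(t)$ to fourth order around zero: since the Edgeworth correction cancels the third-order term by design, the residue is $O(nt^4)$ with coefficient proportional to the Bernoulli fourth cumulant $pq(1-6pq)$. Combining the elementary bound $|1/\sin(t/2)|\le \pi/|t|$ on $(-\pi,\pi)$ with the domination $|\hat\psi_n(t)|\le \exp(-2\sigma^2\sin^2(t/2))$ from Lemma~2, the integrand is controlled by $|t|^3 e^{-c\sigma^2 t^2}$ times the fourth-cumulant prefactor, and the resulting contribution is of order $\sigma^{-2}$, splitting naturally into a symmetric piece and a $(q-p)$-odd piece. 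On the complementary range $T\le|t|\le\pi$, Lemma~2 together with the Gaussian tail of $\hat g$ produces the exponentially small term $e^{-\sigma^2 T^2/2} = e^{-3\sigma/2}$.

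The principal obstacle is the numerical calibration of the constants $0.13$ and $0.18$ rather than the $\sigma^{-2}$ scaling, which drops out automatically. Pinning the constants down demands sharply separating the symmetric and $(q-p)$-antisymmetric parts of the fourth-order Taylor remainder of $\log\hat\psi_n$, controlling $\bigl|\frac{1}{\sin(t/2)} - \frac{2}{t}\bigr|$ on $(-\pi,\pi)$ to isolate the Dirichlet-kernel correction, and optimising the explicit values of $\int_{-\pi}^{\pi} t^j e^{-\sigma^2 t^2/2}\,dt$ for $j=3,4,5$. The hypothesis $\sigma^2\ge 25$ is precisely what ensures the tail term $e^{-3\sigma/2}$ stays safely below the main term $(0.13+0.18|p-q|)/\sigma^2$, which is the regime in which this scheme yields a useful inequality.
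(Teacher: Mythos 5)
First, a point of comparison: the paper does not prove Theorem~B at all. It is quoted verbatim, with a page reference, from Uspensky's 1937 book, and is used in Section~4.2 only as a foil for Neammanee's refinement \eqref{Neam-1}. So your proposal can only be measured against the classical proof, not against anything in this paper.

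Your sketch does reproduce the right skeleton: the discrete inversion formula with the kernel $(e^{-it(a-1/2)}-e^{-it(b+1/2)})/(2i\sin(t/2))$, which automatically builds in the $\pm\tfrac12$ continuity corrections $a_n^+$, $b_n^-$; the Edgeworth term chosen so that $\hat g$ matches $\kappa_3(S_n)=\sigma^2(q-p)$; the cut at $|t|=T=\sqrt{3/\sigma}$ producing $e^{-\sigma^2T^2/2}=e^{-3\sigma/2}$; and the bound $|f(t)|\le\exp\{-2pq\sin^2(t/2)\}$ of Lemma~2 for the tail. But there are two genuine gaps. The decisive one is that the entire content of the theorem is the explicit pair of constants $0.13$ and $0.18$ --- the $O(\sigma^{-2})$ rate by itself is classical and carries no information --- and you explicitly defer ``the numerical calibration of the constants,'' i.e.\ the only part of the statement that actually requires proof. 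That calibration is also where the hypothesis $\sigma^2\ge25$ is consumed: it is used to bound the higher-order remainders numerically at $\sigma=5$, not (as you suggest) to keep $e^{-3\sigma/2}$ below the main term, since that term is simply added to the bound. The second, smaller gap is in your accounting of the central range: the difference $\hat\psi_n(t)-\hat g(t)$ there is not controlled by the fourth cumulant alone. Expanding $\exp\{\kappa_3(it)^3/6+\cdots\}$ against $1+\kappa_3(it)^3/6$ leaves a $\kappa_3^2(it)^6/72$ term which, after multiplication by the kernel and integration against $e^{-\sigma^2t^2/2}$, contributes at the same order $\sigma^{-2}$ (it is even in $q-p$ and feeds the constant $0.13$). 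Omitting it would make the calibration come out wrong even if you carried it through.
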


A lot of works are devoted to generalizations and refinements  of
\eqref{resU-1}, for example,
\cite{Deh,Makabe-1961,Makabe-1955,Mikhailov-93,Neamm,Senatov-2014-Eng,Volkova-95}.

In 2005 K.~Neammanee \cite{Neamm} refined and generalized~\eqref{resU-1} to the case of non-identically
distributed Bernoulli random variables. Let us formulate his result
as applied to the case of Bernoulli trials: {\it if
$\sigma^2\ge100$, then
%
\begin{equation}
\label{Neam-1} \big|{\bf P}(a\le S_n\le b)- \bigl(G
\bigl(b_n^- \bigr)-G \bigl(a_n^+ \bigr) \bigr) \big|<
\frac{0.1618}{\sigma^2},
\end{equation}
where $a_n^+$, $b_n^-$ are defined by the formula  \eqref{x^+}}.\querymark{Q2}

It follows from \eqref{Neam-1} that under condition
$\sigma^2\ge100$,
%
\begin{equation}
\label{Neam-20} \big|{\bf P}(S_n\le b)- G \bigl(b_n^- \bigr)\big|
\le\frac{0.1618}{\sigma^2}.
\end{equation}

We may consider $p\in(0,0.5]$. Denote for brevity, $d=0.1618$. It
follows from \eqref{Neam-20} and the definition of $G(\cdot)$ that
\begin{equation*}
\big|{\bf P}(S_n\le b)-\varPhi \bigl(b_n^- \bigr) \big|<
\frac{|(1-(b_n^-)^2)(q-p)|e^{-(b_n^-)^2/2}}{6\sqrt{2\pi
}\sigma}+\frac{d}{\sigma^2}.
\end{equation*}
 Taking into account that
$\max\limits_{t}|t^2-1|e^{-t^2/2}=1$, we get
%
\begin{equation}
\label{Neam-3} \big|{\bf P}(S_n\le b)-\varPhi \bigl(b_n^-
\bigr) \big|\le \frac{|q-p|}{6\sqrt{2\pi}\sigma}+\frac{d}{\sigma^2} .
\end{equation}

Denote $x_n=\frac{x-\mu}{\sigma}$. It is easily seen that
%
\begin{equation}
\label{Phi-Phi-Neam} \big|\varPhi(b_n)-\varPhi \bigl(b_n^- \bigr)
\big|< \frac{b_n-b_n^-}{\sqrt{2\pi}}=\frac{1}{2\sqrt{2\pi
}\sigma}.
\end{equation} It follows from \eqref{Neam-3},
\eqref{Phi-Phi-Neam} that
\begin{align*}
\big|{\bf P}(S_n\le b)-\varPhi(b_n)\big|< \biggl(
\frac{|q-p|}{6}+\frac{1}{2} \biggr) \frac{1}{\sigma\sqrt{2\pi}}+
\frac{d}{\sigma^2}=\frac{{\cal
E}_1(p)}{\sigma}+\frac{d}{\sigma^2}, %
\end{align*}
provided that $0<p\le0.5$. Thus,
%
\begin{equation}
\label{P-Phi} \Delta_n(p)\le\frac{{\cal
E}_1(p)}{\sigma}+
\frac{0.1618}{\sigma^2}.
\end{equation} Note that
our bound \eqref{532} is  more accurate than \eqref{P-Phi}. To get
the bound $0.409954$ for $C_{02}$ from~\eqref{P-Phi}, we should take
$n$ almost five times larger than in~\eqref{954-4}. Really, with the
help of a computer we have
\begin{align*}
\sup_{p\in[0.1689,0.5]} \biggl( {\cal E}(p)+\frac{0.1618}{\sqrt{npq}\,(p^2+q^2)}
\bigg|_{n=4.6\cdot10^6} \biggr)<0.410031,
\end{align*}
and
\begin{align*}
\sup_{p\in[0.1689,0.5]} \biggl( {\cal E}(p)+\frac{0.1618}{\sqrt{npq}\,(p^2+q^2)}
\bigg|_{n=4.2\cdot10^6} \biggr)>0.410044
\end{align*}
(cf. \eqref{954-4}, \eqref{954-3}).


\begin{remark}
In 2014 V. Senatov obtained non-uniform estimates of the
approximation accuracy in the central limit theorem, and, in particular,
generalized Uspensky's result \eqref{resU-1} to lattice
distributions \cite{Senatov-2014-Eng}.
\end{remark}

\subsection{Proof of Theorem \ref{th-2}}

Before proving Theorem \ref{th-2}, we first prove
Lemma~\ref{lem-1-ZNC}.

\begin{proof}[Proof of Lemma \ref{lem-1-ZNC}] By  \cite[Theorem
1]{KorShv-Obozr-2010-2},
%
\begin{equation}
\label{var-1}\Delta_n(p)\le\frac{0.33477}{\sqrt{n}}\, \bigl(
\varrho(p)+0.429 \bigr).
\end{equation}
Therefore,
$T_n(p)\equiv\frac{\sqrt{n}\,\Delta_n(p)}{\varrho(p)}\le0.33477 (1+\frac{0.429}{\varrho(p)} )$.
Since $\varrho(p)$ decreases on $(0, 0.5]$, then
$\max\limits_{p\in(0,0.1689]}\frac{1}{\varrho(p)}=\frac{1}{\varrho(0.1689)}=0.52090548\ldots\;\,$.
Consequently,
\begin{equation*}
\max\limits
_{p\in(0,0.1689]}T_n(p)\le 0.33477(1+0.429
\cdot0.52090549)<0.409581.
\qedhere
\end{equation*}
\end{proof}

\begin{remark} If instead of \cite[Theorem
1]{KorShv-Obozr-2010-2} we will use other modifications of the
Berry--Esseen inequality  by I.~Shevtsova
\cite{arxive-Shevtsova-2013}, the interval (0,0.1689] for which
Lem\-ma~\ref{lem-1-ZNC} is true can be extended, i.e. one can find $
b> 0.1689 $ such that the inequality $\max\limits_{p\in(0,
b]}T_n(p)<C_E $ will be fulfilled. This will narrow the interval $ I
$ (see~\eqref{<0.4-S}), which in turn will reduce the computation
time on the supercomputer.

Let us indicate such $b$. The estimates found in
\cite{arxive-Shevtsova-2013} as applied to the particular case of
Bernoulli trials can be written in the following form,
%
\begin{align}
\label{var-2}&\Delta_n(p)\le\frac{0.33554}{\sqrt{n}}\, \bigl(
\varrho(p)+0.415 \bigr),
\\
\label{var-3}&\Delta_n(p)\le\frac{0.3328}{\sqrt{n}}\, \bigl(
\varrho(p)+0.429 \bigr).
\end{align}
It is easy to verify that inequality \eqref{var-2} implies $b
=0.174$, and \eqref{var-3} implies that $b =0.177$.
\end{remark}

\begin{proof}[Proof of Theorem \ref{th-2}] It follows from
Corollary~\ref{th-1.2} and
 \eqref{<0.4-S} that for all  $p\in I$ the following bound holds,
%
\begin{equation}
\label{Knp<0.41}T_n(p)<0.40973213+4.6\cdot10^{-9}<
0.4097321346,\quad 1\le n\le N_0.
\end{equation}
Then by Lemma~\ref{lem-1-ZNC}, this inequality is fulfilled  for all
$p\in(0,0.5]$ as well. It is not hard to see that the bound
\eqref{Knp<0.41} is also true for all $p\in(0.5,1)$. Hence,
bound~\eqref{954} implies Theorem~\ref{th-2}.
\end{proof}


\begin{acknowledgement}[title={Acknowledgments}]
We thank the following colleagues from Lomonosov Moscow State
University  for providing the opportunity to use supercomputer Blue
Gene/P:    V.~Yu.~Korolev, Head of the Department of Mathematical
Statistics of the Faculty of Computational Mathematics and
Cybernetics, Professor, I.~G.~Shevtsova, Assistant Professor of the
same Department,  A.~V.~Gulyaev, Deputy Dean of the same Faculty,
and S.~V.~Korobkov, the Data Center administrator.

We also thank our colleagues from Computing Center FEB RAS for the
opportunity to use the Center for the Collective Use ``Data Center
FEB RAS''.

We also would like to thank reviewers for useful comments.
\end{acknowledgement}


\end{document}